\crefname{hypothesis}{Hypothesis}{Hypotheses}
\crefname{assumption}{Assumption}{Assumptions}
\crefname{remark}{Remark}{Remarks}
\crefname{example}{Example}{Examples}
\Crefname{ALC@unique}{Line}{Lines} 
\title{Analysis of the Stochastic Alternating Least Squares Method for the Decomposition of Random Tensors\thanks{Submitted to the editors 04/01/2020.
}}
\author{Yanzhao Cao\thanks{Department of Mathematics and Statistics, Auburn University, Auburn AL 
  (\email{yzc0009@auburn.edu}, \email{szd0041@auburn.edu}, \email{oeding@auburn.edu}, \email{hzv0008@auburn.edu}).}
\and Somak Das\footnotemark[2] \and  Luke Oeding\footnotemark[2] \and Hans-Werner van Wyk\footnotemark[2]}
\newcommand{\X}{\mathcal X}
\newcommand{\bX}{\bm{\mathcal{X}}}
\newcommand{\x}{{\bm x}}
\newcommand{\g}{{\bm g}}
\newcommand{\ba}{{\bm a}}
\newcommand{\F}{\mathcal{F}}
\newcommand{\cmin}{c_{\min}}
\newcommand{\cmax}{c_{\max}}
\newcommand{\E}{\mathbb{E}}
\newcommand{\R}{\mathbb{R}}
\newcommand{\db}[1]{\left\llbracket #1 \right\rrbracket}
\DeclareMathOperator*{\argmin}{argmin}
\newcommand{\ccirc}[1]{\xymatrix@1{*+<1ex>[o][F-]{#1}}}
\begin{document}
\maketitle

\begin{abstract}
Stochastic Alternating Least Squares (SALS) is a method that approximates the canonical decomposition of averages of sampled random tensors. Its simplicity and efficient memory usage make SALS an ideal tool for decomposing tensors in an online setting. We show, under mild regularization and readily verifiable assumptions on the boundedness of the data, that the SALS algorithm is globally convergent. Numerical experiments validate our theoretical findings and demonstrate the algorithm's performance and complexity. 
\end{abstract}

\begin{keywords}
canonical decomposition, parallel factors, stochastic optimization, stochastic alternating least squares, random tensor, block nonlinear Gauss-Seidel 
\end{keywords}

\begin{AMS}
65K10, 90C15, 15A69, 68W20
\end{AMS}

\section{Introduction}
Multi-modal data is represented by a tensor, or a multidimensional matrix. Tensor data is present in ares such as Natural Language Processing (NLP) \cite{bouchard-etal-2015-matrix}, Blind Source Separation (BSS) \cite{Bousse, SGB}, and Phylogenetic Tree Reconstruction (PTR) \cite{eriksson2005tree, allman2006identifiability, ishteva2013unfolding}. In each of these areas, canonical decomposition (CANDECOMP)/parallel factors (PARAFAC), also known as CP tensor decomposition (representing a given tensor as a sum of rank-one tensors), provides important insights since the components of the rank-one terms, the factors, represent meaning in the data. For NLP given a tensor constructed from large text databases, the rank-one terms could represent topics, and the factors could represent words. For BSS given a tensor constructed from signals arriving at a single receiver from unknown sources, the rank-one terms could represent sources, and the factors could be used to locate the sources. For PTR given a tensor constructed as a contingency table for instances of neucleotide combinations from aligned DNA from several species, the factors represent model parameters for the phylogenetic tree. 

In applications observations may represent samples of an underlying \emph{random tensor}. For example, the word co-occurence frequencies used in NLP may come from a sample of a collection of documents. Here fast, reliable algorithms are desired for the CP decomposition of the \emph{average tensor}. Because CP decomposition is known to be NP hard in general \cite{HillarLim09mosttensor}, and because the average tensor is usually only available through sampling, we investigate approximate decompositions based on stochastic optimization. In \cite{Maehara2016}, Maehara et al. propose a stochastic version of the widely-used alternating least squares (ALS) method, the stochastic alternating least squares (SALS) method, and show the algorithm's efficiency by means of numerical experiments. In this paper, we provide a detailed analysis of the algorithm, showing under mild regularization and a minimal set of verifiable assumptions that it is globally convergent to a local minimizer for any initial guess. In \Cref{section:numerical}, we also include a detailed discussion the algorithm's complexity and efficiency.

The alternating least squares (ALS) method, first proposed by Carroll and Chang \cite{Carroll1970}, remains the most widely used algorithm for computing the CP decomposition of a tensor \cite{Faber2003}. This block-nonlinear Gauss-Seidel method \cite{Bertsekas1989,Ortega2000} exploits the component-wise quadratic structure of the cost functional to compute iterates efficiently and with a low memory footprint. It has been modified \cite{Cheng2016} to exploit sparsity structure in data, which typically occurs in tensors representing co-occurence frequencies such as in the tree reconstruction above. Regularized versions of the ALS method, as well as the associated proximal minimization iterations considered in \cite{Li2013b}, help mitigate the potential ill-posedness of the CP problem to within sample noise.
Although one may compute the tensor's expectation \emph{a priori} and decompose it by means of the standard ALS method (see \cref{rem:equivalent_to_cp_of_expectation}), such an approach results in a loss of sparsity and cannot seemlessly acommodate the arrival of new samples. In \cite{Maehara2016}, Maehara et al. proposed the Stochastic Alternating Least Squares method, a block-stochastic minimization method that preserves the salient features of the ALS method, while also efficiently incorporating tensor samples in the optimization procedure.
Recent work (see \cite{Battaglino2018,Kolda2019}) has considered the related problem of using randomized (or stochastic) methods to decompose existing large-scale tensors. In particular, in \cite{Battaglino2018}, a variant of the ALS method is developed that approximates the component least squares problem efficiently by randomized/sketching methods. In \cite{Kolda2019}, a dense tensor is expressed as the expectation of a sequence of sparse ones, thereby allowing the use of stochastic gradient descent (SGD) methods.  

The convergence analysis of the SALS algorithm applied to the CP decomposition of a random tensor is complicated by the fact that the underlying cost functional is not convex. Moreover, the algorithm itself is a stochastic, block-iterative, second order method whose successive iterates do not have the Markovian dependence structure present in classical stochastic gradient descent (SGD) methods. The convergence of the ALS method was studied in \cite{Bezdek2003}, where a quotient-linear convergence rate was established. Block-coordinate techniques for the unconstrained optimization of general, possibly nonconvex, deterministic functionals were treated in \cite{Grippof1999} (see also \cite{Nesterov2012, Beck2013, Xu2017} and references therein). Xu and Yin \cite{Xu2015} discuss the convergence of block stochastic \emph{gradient} methods for averages of convex and nonconvex functionals. They rely on assumptions (such as the uniformly bounded variance of gradient iterates) that, while standard in the literature (see e.g. \cite{Bottou2018}), are difficult to verify in practice since they pertain to the algorithm itself.  The main contributions of this paper are to prove the convergence of the SALS algorithm, a block-stochastic \emph{Newton} method, for the CP decomposition of the average of a \emph{random} tensor, subject to a single verifiable assumption relating to the boundedness of the observed data.

This paper is structured as follows. In \Cref{section:problem}, we introduce the CP decomposition problem for random tensors and describe the stochastic alternating least squares algorithm (\cref{alg:stochastic_als}). \Cref{section:convergence} contains our main theoretical contributions. In \Cref{subsection:regularity}, we exploit the special multinomial structure of the cost functional to quantify the regularity of its component-wise gradient and Hessian in terms of the size of the algorithm's iterate vectors (see \cref{lem:compgrad_lipschitz} and its corollaries). In \Cref{subsection:boundedness} we show that the iterates themselves can be bounded in terms of the random tensor to be decomposed (\cref{lem:iterates_bnded}), which naturally leads to our single, verifiable assumption on the latter's statistical properties (\cref{ass:tensor_bounded}). In \Cref{subsection:convergence_proof}, we show that the iterates generated by the SALS algorithm converge to a local minimizer. We validate the SALS method numerically via a few computational examples in \Cref{section:numerical} and offer concluding remarks in \Cref{section:conclusion}.

\section{Notation and Problem Description} \label{section:problem}
We to follow notational conventions that are closely aligned with the literature on the CP decomposition (see \cite{Kolda2009}), as well as on stochastic optimization (see \cite{Bottou2018,Xu2015}). Use use lower case letters to denote scalars, bold letters for vectors, uppercase letters for matrices, and uppercase calligraphic letters for tensors. We use superscripts to indicate iteration (or sub-iteration) indices and subscripts for components, i.e. $x_i^k$ is the $i$-th component at the $k$-th iteration. Multi-indices are denoted by bold letters and sums, products, and integrals over these should be interpreted to be in iterated form. For the block coordinate minimization method in \cref{alg:stochastic_als}, it is convenient to express the vector $\x=(\x_1,...,\x_p)$ in terms of its $i$-th block component $\x_i$ and the remaining components, denoted  $\x_{i^*}:=(\x_1,\hdots,\x_{i-1},\x_{i+1},\hdots,\x_p)$. Using this notation, we write $\x=(\x_i,\x_{i^*})$ with the implicit understanding that the block components are in the correct order. The Frobenius norm of a vector, matrix, or tensor, i.e. the root mean square of its entries, is denoted by $\|\cdot \|$. For  $1\leq p \leq \infty$, $\|\cdot\|_p$ denotes the standard Euclidean $p$-norm for vectors and the induced matrix $p$-norm for matrices. We use `$\circ$' to denote the outer product, `$*$' for the component-wise (or Hadamard) product, `$\odot$' for the column-wise Khatri-Rao product, and `$\otimes$' for the Kronecker product.

Let $(\Omega, \mathcal F, d\mu)$ be a complete probability space and let $\X: \Omega \rightarrow \R^{n_1\times n_2\hdots \times n_p}$ be a measurable map, also known as a $p$-th  order random tensor. In practice the underlying probability space is rarely known explicitly. Instead, its law can often be observed indirectly through sample realizations $\X^1,\X^2,\hdots,\X^n$ of $\X$ that are assumed to be independent and identically distributed (iid). We use $\E$ to denote expectation:
\[
\E\left[f\right] = \int_\Omega f(\X)d\mu_\X
\]
for any integrable function $f:\R^{n_1\times \hdots \times n_p} \rightarrow \R^{m}$. 

The rank-$r$  decomposition problem for $\X$ (or its sample realizations) amounts to finding a set of $r$ rank-one deterministic tensors $\{\hat \X_j\}_{j=1}^r$,  so that the quantity 
\begin{equation}\label{eq:rank_r_approximation}
\hat \X := \sum_{j=1}^r \hat \X_j
\end{equation} 
is a good overall representation of $\X$. Each rank-one tensor $\hat \X_j$, $j=1,\hdots,r$, is formed by the outer product $\hat \X_j = {\bm a}_{1,j}\circ \hdots \circ {\bm a}_{p,j}$ where ${\bm a}_{i,j} = (a_{i,j,1},\hdots,a_{i,j,n_i})\in \R^{n_i}$ for each $i=1,\hdots p$, i.e. $\hat \X_{j}\in \R^{n_1\times \hdots \times n_p}$ is defined component-wise by 
\[
\hat \X_{j,i_1,\hdots,i_p}=\prod_{l=1}^p a_{l,j,i_l}.
\] 
We use the mean squared error $\E\left[\|\X - \hat \X\|^2 \right]$ to measure the quality of the representation. Other risk measures may be more appropriate, depending on the application, possibly requiring a different analysis and approximation technique.

For analysis and computation, it is convenient to consider two other representations of the design variable. We define the $i$-th factor matrix $A_i = [{\bm a}_{i,1},\hdots,{\bm a}_{i,r}]$ in $\R^{n_i\times r}$ to consist of the $i$-th component vectors of each term in decomposition \eqref{eq:rank_r_approximation}. Let ${\bm x}=\mathrm{vec}([A_1,\hdots,A_r])\in \R^{nr}$, with $n=\sum_{i=1}^p n_i$, be the vectorization of the factor matrices, i.e. the concatenation of their column vectors. We also write $\x$ in block component form as $\x = (\x_1,\hdots,\x_p)$, where $\x_i = \mathrm{vec}(A_i)\in \R^{rn_i}$ for $i=1,\hdots,p$. To emphasize dependence of $\hat \X$ on ${\bm x}$, we write the following (which also defines $\db{\cdot}$)
\[
\hat \X = \db{\x} = \db{A_1,\hdots,A_p} := \sum_{j=1}^r {\bm a}_{1,j}\circ \hdots \circ {\bm a}_{p,j}.
\]
No \emph{efficient} closed form solution of the factorization problem exists (even for deterministic tensors)  \cite{HillarLim09mosttensor}.
So it is commonly reformulated as the optimization problem 
 
\begin{equation}\label{eq:random_tensor_decomposition}
\min_{{\bm x}\in \R^{nr}} \E\left[\|\X-\db{\x}\|^2\right].
\end{equation}

\begin{remark}\label{rem:equivalent_to_cp_of_expectation}
Letting ${\bm i} = (i_1,\hdots,i_p)$ and ${\bm n} = (n_1,\hdots,n_p)$, it follows readily that
\begin{align*}
\E\left[ \left\|\X-\llbracket \x\rrbracket\right\|^2\right] &= 
\E\left[ \sum_{{\bm i}=1}^{\bm n} \left(\X_{\bm i} - \llbracket {\bm x}\rrbracket_{\bm i}\right)^2 \right] \\&= \sum_{{\bm i}=1}^{{\bm n}} \E\left[\X_{\bm i}^2\right] - 2 \E\left[\X_{\bm i}\right]\llbracket {\bm x}\rrbracket_{\bm i} + \llbracket {\bm x}\rrbracket_{\bm i}^2 \\
&= \sum_{{\bm i}=1}^{\bm n} \mathrm{var}(\X_{\bm i}) + \sum_{{\bm i}=1}^{\bm n} \left(\E(\X_{\bm i})-\llbracket {\bm x}\rrbracket_{\bm i}\right)^2.
\end{align*}
Since the variance $\mathrm{var}(\X_{{\bm i}})$ of $\X_{{\bm i}}$ is constant in the design variable $\bm x$, the minimization Problem \eqref{eq:random_tensor_decomposition} is equivalent to the decomposition of the expected tensor, i.e.
\[
 \min_{x\in \R^{nr}}  \frac{1}{2}\|\E[\X]-\llbracket {\bm x} \rrbracket \|^2.
\]
The analysis in this work is however based on the formulation given by \eqref{eq:random_tensor_decomposition}, since it lends itself more readily to extensions to more general risk functions. 
\end{remark}

Tensor decompositions have scale ambiguity. Indeed, for any direction $i=1,..,p$ and any scalars $\beta_{i,1},\hdots,\beta_{i,r-1}>0$, let $\beta_{i,r}=1/\prod_{j=1}^{r-1}\beta_{i,j}$. Then
\[
\sum_{j=1}^r (\beta_{1,j}{\bm a}_{1,j})\circ \hdots \circ (\beta_{p,j}{\bm a}_{p,j}) = \sum_{j=1}^r{\bm a}_{1,j}\circ \hdots \circ {\bm a}_{p,j}.
\]
The optimizer of \eqref{eq:random_tensor_decomposition} therefore lies on a simply connected manifold (see \cite{Uschmajew2012}), which can lead to difficulties in the convergence of optimization algorithms. To ensure isolated minima that can be readily located, it is common to enforce bounds on the size of the factors, either directly through an appropriate normalization (see \cite{Uschmajew2012}), or indirectly through the addition of a regularization term \cite{Maehara2016}. We pursue the latter strategy, leading to the problem

\begin{multline}
\label{eq:regularized_random_tensor_decomposition}
\min_{{\bm x}\in \R^{nr}} F({\bm x}) = \min_{{\bm x}\in \R^{nr}}  \E\left[f(\x)\right],
\text{ with}
\\
f(\x;\X) = \frac{1}{2}\left\|\X - \llbracket {\bm x} \rrbracket \right\|^2 + \frac{\lambda}{2}\|{\bm x}\|^2, \qquad \x\in \R^{nr}, \lambda>0.
\end{multline}

While the regularization term biases the minimizers of Problem \eqref{eq:random_tensor_decomposition} its inclusion is key to guaranteeing well-posedness in the presence of noise. It plays a pivotal role in (i) proving the existence of a minimizer (\cref{lem:existence}), (ii) ensuring the positive-definiteness of the Hessian (\cref{cor:hessian_posdef}), and (iii) guaranteeing the boundedness of iterates in terms of random tensor $\X$ (\cref{lem:iterates_bnded}). Heuristic methods are typically used to choose the value of $\lambda$ that balances the bias of the optimizers against stability considerations, the most well-known of which is the Morozov discrepancy principle \cite{Morozov1984}. 

\begin{lemma}[Existence of Minimizers]\label{lem:existence}
Problem \eqref{eq:regularized_random_tensor_decomposition} has at least one minimizer.
\end{lemma}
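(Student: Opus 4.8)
The plan is to apply the direct method of the calculus of variations (the Weierstrass argument): establish that the objective is continuous and coercive, so that a minimizing sequence admits a convergent subsequence whose limit is a minimizer. First I would invoke \cref{rem:equivalent_to_cp_of_expectation} to replace $F$ by the equivalent objective $G(\x) = \frac{1}{2}\|\E[\X]-\db{\x}\|^2 + \frac{\lambda}{2}\|\x\|^2$, which differs from $F$ by the additive constant $\tfrac{1}{2}\sum_{\bm i}\mathrm{var}(\X_{\bm i})$ and hence has the same minimizers. The advantage is that $G$ is manifestly a polynomial in the entries of $\x$ (the map $\x\mapsto\db{\x}$ is polynomial of degree $p$), so $G$ is continuous---indeed smooth---on all of $\R^{nr}$, sidestepping any measure-theoretic subtleties in differentiating under the expectation.

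Next I would prove coercivity, which is where the regularization term is essential. Since the squared-norm data-fidelity term is nonnegative, we have the pointwise bound $G(\x)\geq \frac{\lambda}{2}\|\x\|^2$, so $G(\x)\to\infty$ as $\|\x\|\to\infty$ and every sublevel set $\{\x:G(\x)\leq c\}$ is bounded. Being also closed (by continuity of $G$), these sublevel sets are compact.

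With continuity and coercivity in hand the conclusion is routine: let $m:=\inf_{\x}G(\x)$, which is finite since $G\geq 0$ and $G\not\equiv +\infty$, and choose a minimizing sequence $\x^k$ with $G(\x^k)\to m$. Coercivity forces $\{\x^k\}$ to lie in a bounded, hence (by Bolzano--Weierstrass) precompact, subset of $\R^{nr}$, so some subsequence $\x^{k_j}$ converges to a point $\x^\ast$. Continuity of $G$ then yields $G(\x^\ast)=\lim_j G(\x^{k_j})=m$, so $\x^\ast$ is the desired minimizer.

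The only genuine obstacle is coercivity, and it is exactly here that regularization does its work. Without the term $\frac{\lambda}{2}\|\x\|^2$, the scale ambiguity discussed above lets one send factors off to infinity while leaving $\db{\x}$, and hence the data-fidelity term, unchanged, so the unregularized objective need not be coercive and a minimizer can fail to exist. A secondary point to record is that $F$ itself is finite-valued only when $\X$ has finite second moments, i.e. $\E[\|\X\|^2]<\infty$; passing through $G$ makes the existence argument insensitive to this, since the (possibly large) variance enters only as an additive constant, but this integrability is what we assume throughout so that $F$ equals $G$ plus a finite constant.
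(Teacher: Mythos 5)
Your proof is correct and takes essentially the same route as the paper's: both arguments rest on the coercivity bound $F(\x)\geq \frac{\lambda}{2}\|\x\|^2$ supplied by the regularizer (the paper's inequality \eqref{eq:bound_normx_by_F}), applied to a minimizing sequence, followed by Bolzano--Weierstrass and continuity of the objective. Your preliminary reduction to the deterministic objective via \cref{rem:equivalent_to_cp_of_expectation} is a minor refinement that makes the continuity of the objective and the implicit integrability requirement $\E\left[\|\X\|^2\right]<\infty$ explicit, but it does not change the structure of the argument.
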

\begin{proof}
Let $F^* = \inf_{{\bm x}\in \R^{nr}} F({\bm x})$. So there exists a sequence $\{{\bm x^k}\}_{k=1}^\infty$ in $\R^{nr}$ with
\[
\lim_{k\rightarrow \infty} F({\bm x}^k) = F^*,
\] 
from which it follows that $F({\bm x^k})$ is bounded. The inequality 
\begin{equation}\label{eq:bound_normx_by_F}
\|{\bm x}\|^2 \leq \frac{2}{\lambda}F({\bm x}) \ \ \text{for all } {\bm x}\in \R^{rn} ,
\end{equation} 
allows us to establish the boundedness of $\{\x^k\}_{k=1}^\infty$. By compactness, there exists a convergent subsequence ${\bm x}^{k_i}\rightarrow {\bm x}^*$ as $i\rightarrow \infty$. The continuity of $F$ then implies
\[
F({\bm x^*}) = \lim_{i\rightarrow \infty} F({\bm x}^{k_i}) = F^*.
\]
\end{proof}

\begin{remark}
For the sample realizations $f(\x;\X)$ we have a bound similar to \eqref{eq:bound_normx_by_F}: 
\begin{equation}\label{eq:bound_normx_by_f}
\text{for all } \x \in \R^{nr}, \quad \|\x\|^2 \leq \frac{2}{\lambda} f(\x;\X), \ \ \ \text{a.s. on } \Omega.
\end{equation}
\end{remark}

\subsection{The Stochastic Alternating Least Squares Method}

Although the objective function $F$ is a high degree polynomial in general, it is quadratic in each component factor matrix $A_i$. This is apparent from the matricized form of \eqref{eq:regularized_random_tensor_decomposition}. Recall \cite{Khatri1968} that the columnwise Khatri-Rao product $\odot: \R^{n_i\times r}\times \R^{n_j\times r}\rightarrow \R^{n_in_j\times r}$ of matrices $A=[\bm{a}_1,...,{\bm a}_r]$ and $B = [{\bm b}_1,...,{\bm b}_r]$ is defined as their columnwise Kronecker product, i.e. $A\odot B = [{\bm a}_1 \otimes {\bm b}_1,...,{\bm a}_r \otimes {\bm b}_r]$. 
The matricization $\llbracket A_1,\hdots,A_p \rrbracket_{(i)}$ of the rank $r$ tensor $\llbracket A_1,\hdots,A_p \rrbracket$ along the $i$-th fiber takes the form (see \cite{Acar2011})
\begin{equation}\label{eq:theta_definition}
\llbracket A_1,\hdots,A_p \rrbracket_{(i)} = A_i\left(A_p \odot \hdots \odot A_{i+1} \odot A_{i-1} \odot \hdots \odot A_{1}\right)^T=:A_i \Theta_i^T,
\end{equation}
where $\Theta_i$ is defined to be the iterated Khatri-Rao product on the right. Note that $\Theta_i$ does not depend on $A_i$, and hence the matricized tensor decomposition is linear in $A_i$. Since the Frobenius norm is invariant under matricization, the sample objective function $f(\x,\X)$ can then be rewritten as a quadratic function in $A_i$, i.e.
\[
f(A_1,\hdots,A_p;\X) := \frac{1}{2}\left\|\X_{(i)} - A_i \Theta_i^T \right\|^2 + \frac{\lambda}{2} \sum_{j=1}^p\|A_j\|^2.
\]
Vectorizing this form yields a linear least squares objective function in $\x_i$, namely
\[
f(\x_1,\hdots,\x_p;\X) = \frac{1}{2}\left\|\mathrm{vec}(\X_{(i)}) - (\Theta_i \otimes I_{n_i}) \x_i \right\|^2 + \frac{\lambda}{2}\sum_{j=1}^p\|\x_{j}\|^2,
\]
whose component-wise gradient and Hessian are given respectively by
\begin{align}\label{eq:componentwise_stochastic_gradient}
\nabla_{\x_i}f(\x;\X) &= -(\Theta_i^T\otimes I_{n_i})\mathrm{vec}(\X_{(i)}) + \left((\Theta_i^T\Theta_i+\lambda I_{r})\otimes I_{n_i}\right)\x_{i}, \quad \text{and}
\\
\label{eq:componentwise_stochastic_hessian}
\nabla_{\x_{i}}^2 f(\x) &= (\Theta_i^T\Theta_i+\lambda I_{r})\otimes I_{n_i},
\end{align}
where $I_r \in \R^{r\times r}$ and $I_{n_i} \in \R^{n_i\times n_i}$ are identity matrices. In the presence of regularization, the Hessian matrix \eqref{eq:componentwise_stochastic_hessian} is strictly positive definite with lower bound that is independent of both $\X$ and of the remaining components $\x_{i^*}$ (see \cref{cor:hessian_posdef}). Consequently, each sampled component-wise problem
\[
\min_{\x_i \in \R^{rn_i}} f(\x;\X) 
\]
has a unique solution given by the stationary point $\x_i$ satisfying $\nabla_{\x_{i}}f(\x,\X)=0$. It is more efficient  to use the matricized form of the stationarity condition, i.e.
\begin{equation}\label{eq:sample_fonop}
0 = -\X_{(i)} \Theta_i + A_i (\Theta_i^T\Theta_i + \lambda I_r). 
\end{equation}
For any $A\in \R^{n_i\times r}, B\in \R^{n_j\times r}$, it can be shown \cite{Smilde2004} that
\[
(A\odot B)^T(A\odot B) = (A^TA) * (B^TB),
\]
where $*$ denotes the (Hadamard) product. Repeatedly using this identity, we have
\begin{equation}
\Theta_i^T \Theta_i := (A_1^TA_1) * \hdots * (A_{i-1}^TA_{i-1}) * (A_{i+1}^T A_{i+1}) * \hdots * (A_p^TA_p),  \label{eq:theta_transpose_theta}
\end{equation}
so the Hessian can be computed as the component-wise product of $p$ matrices in $\R^{r\times r}$. 

The (deterministic) ALS method (\cref{alg:als}) exploits the component-wise quadratic structure of the objective functional $F$, which it inherits from $f$. In the $k$-th block iteration, the ALS algorithm cycles through the $p$ components $\x_1,\hdots,\x_p$ of $\x$, updating each component in turn in the direction of the component-wise minimizer. The function is also updated at each subiteration to reflect this change.  Specifically, the iterate at the beginning of the $k$-th block is denoted by
\begin{align*}
\x^k = \x^{k,0} = (\x_1^k,\x_2^k,...,\x_p^k). 
\end{align*}
The ALS algorithm then generates a sequence of sub-iterates $\x^{k,1},...,\x^{k,p}$, where
\[
\x^{k,i} = (\x_1^{k+1},\hdots,\x_{i-1}^{k+1},\x_i^{k+1}, \x_{i+1}^{k},\hdots,\x_p^k).
\] 
Note that, under this convention, $\x^{k,p}=\x^{k+1}=\x^{k+1,0}$.

\begin{algorithm}
\caption{The Alternating Least Squares Algorithm}
\label{alg:als}
\begin{algorithmic}[1]
\STATE{Initial guess ${\bm x^1}$}
\FOR{$k=1,2,\hdots$}
\FOR{$i=1,\hdots,p$}
\STATE{$\displaystyle \x_i^{k+1} = \argmin_{\x_i\in \R^{n_i}} F(\x_1^{k+1},\hdots,\x_{i-1}^{k+1},\x_i, \x_{i+1}^k,\hdots,\x_p^k)$}
\STATE{${\bm x}^{k,i}=({\bm x}_1^{k+1},\hdots,{\bm x}_i^{k+1},{\bm x}_{i+1}^k,\hdots,{\bm x}_p^k)$}
\ENDFOR
\ENDFOR
\end{algorithmic}
\end{algorithm}

Although the descent achieved by the ALS method during $k$-th block iteration is most likely not as large as a descent would be for a monolithic descent direction for $F$ over the entire space $\R^{nr}$, the ALS updates can be obtained at a significantly lower computational cost and with a lower memory footprint. 

The block-component-wise quadratic structure of the integrand $F$ allows us to compute $\x_i^{k+1}$ explicitly. To this end we form the second order Taylor expansion $F$ about $\x_i$ at the current iterate $\x^{k,i-1}$, i.e.
\begin{align}\label{eq:expected_cost_taylor}
F(\x_1^{k+1},\hdots\x_{i-1}^{k+1},\x_i+{\bm p},\x_{i+1}^k,\hdots,\x_p^{k}) &= F^{k,i} + (\g^{k,i})^T {\bm p}+ \frac{1}{2} {\bm p}^T H^{k,i} {\bm p},
\\\nonumber
&\hspace{-13em} \text{with}\quad F^{k,i} = F(\x^{k,i-1}) = \E\left[f(\x^{k,i-1})\right],
\\\nonumber
&\hspace{-10em}{\bm g}^{k,i} = \nabla_{\x_i} F(\x^{k,i-1}) = \E\left[\nabla_{\x_i} f(\x^{k,i-1})\right], \quad \text{and}
\\\nonumber
&\hspace{-10em}H^{k,i} = \nabla_{\x_i}^2 F(\x^{k,i-1}) = \nabla_{\x_i}^2 f(\x^{k,i-1}).
\end{align}
Note that $H^{k,i}$ does not depend explicitly on $\X$ and is therefore a deterministic quantity for known $\x$. The component-wise minimizer $\x_i^{k+1}$ of $F$ can be calculated explicitly as the Newton update 
\[
\x_i^{k+1} = \x_i^k - (H^{k,i})^{-1}{\bm g}^{k,i}.
\]

Although both the design variable and the functions appearing in Problem \eqref{eq:regularized_random_tensor_decomposition} are deterministic, any gradient based optimization procedure, such as the ALS method described above, requires the approximation of an expectation at every iterate, the  computational effort of which is considerable. This suggests the use of stochastic gradient sampling algorithms that efficiently incorporate the sampling procedure into the optimization iteration, while exploiting data sparsity.

The stochastic gradient descent method \cite{Robbins1951} addresses the cost of approximating the expectation by computing descent directions from small sampled batches of function values and gradients at each step of the iteration. To accommodate the noisy gradients, the step size is reduced at a predetermined rate. For the stochastic alternating least squares (SALS) method (\cref{alg:stochastic_als}), we determine the sample at the beginning of the $k$-th block iteration. For a batch $\bX = (\X^1,\hdots,\X^m)$ of $m$ iid random samples of $\X$, we write the component-wise batch average $\tilde{f}$ of $f$ as 
\[
\tilde{f}(\x;\bX) = \frac{1}{m}\sum_{l=1}^m f(\x;\X^l).
\]
As before, we can compute the component-wise minimizer
\begin{equation}\label{eq:componentwise_minimizer}
\hat \x_i^{k+1} = \argmin_{\x_i \in \R^{n_i}} \tilde f(\x_{1}^{k+1},\hdots\x_{i-1}^{k+1},\x_i,\x_{i+1}^{k},\hdots,\x_p^k ;\bX)
\end{equation}
at the $i$-th subiteration by means of the Newton step. To this end we express $\tilde f$ in terms of its second order Taylor expansion about the current iterate $\x^{k,i-1}$, i.e.
\begin{align*}
\tilde f(\x_1^{k+1},\hdots,\x_{i-1}^{k+1},\x_i+{\bm p},\x_{i+1}^k,\hdots,\x_p^{k};\bX) = \tilde f^{k,i} + (\tilde \g^{k,i})^T {\bm p}+ \frac{1}{2} {\bm p}^T H^{k,i} {\bm p}, \quad\text{with}
\\
\tilde f^{k,i}(\bX) = \frac{1}{m}\sum_{l=1}^m f(\x^{k,i-1};\X^l), \quad \text{and}\quad
\tilde {\bm g}^{k,i}(\bX) = \frac{1}{m}\sum_{l=1}^m\nabla_{\x_i} f(\x^{k,i-1};\X^l).
\end{align*}
The sample minimizer is thus
\[
\hat \x_i^{k+1} = \x_i^k - \left(H^{k,i}\right)^{-1}\tilde \g^{k,i}.
\]
To mitigate the effects of noise on the estimate, especially during later iterations, we modify the update by introducing a variable stepsize parameter $\alpha^{k,i}>0$, so that 
\begin{equation}\label{eq:x_update_sals}
\x_i^{k+1} = \x_i^k - \alpha^{k,i} \left(H^{k,i}\right)^{-1}\tilde \g^{k,i}.
\end{equation}

\begin{algorithm}
\caption{Stochastic Alternating Least Squares Algorithm}
\label{alg:stochastic_als}
\begin{algorithmic}[1]
\STATE{Initial guess ${\bm x^1}$}
\FOR{$k=1,2,\hdots$}
\STATE{Generate a random sample $\bX^k = [\X^{k,1},\hdots,\X^{k,m_k}]$}
\FOR{$i=1,\hdots,p$}
\STATE{Compute sample gradient $\tilde {\bm g}^{k,i}$ and Hessian $H^{k,i}$}
\STATE{Compute step size $\alpha^{k,i}$}
\STATE{Update $i$-th block ${\bm x}_{i}^{k+1}={\bm x}_{i}^{k} - \alpha^{k,i}\left(H^{k,i}\right)^{-1}\tilde{\bm g}^{k,i}$ so that \newline ${\bm x}^{k,i}=({\bm x}_1^{k+1},\hdots,{\bm x}_i^{k+1},{\bm x}_{i+1}^k,\hdots,{\bm x}_p^k)$}
\ENDFOR
\STATE{$\x^{k+1} = \x^{k,p}$}
\ENDFOR
\end{algorithmic}
\end{algorithm}

It is well known (see e.g. \cite{Bottou2018}) that a stepsize $\alpha^{k,i}$ that decreases at the rate of $O\left(\frac{1}{k}\right)$ as $k\rightarrow \infty$ leads to an optimal convergence rate for stochastic gradient methods. Here, we specify that the stepsize takes the form $\alpha^{k,i} = \frac{c^{k,i}}{k}$, where $c^{k,i}$ is a strictly positive, bounded sequence, i.e. there are constants $0<\cmin<\cmax\leq 2$ such that
\[
0<\cmin\leq c^{k,i} \leq \cmax, \qquad \text{for all } k=1,2,\hdots, \ i=1,2,\hdots,p.
\] 

One of the difficulties in analyzing the convergence behavior of stochastic sampling methods arises from the fact that the iterates $\x^{k,i}$ constitute a stochastic process generated by a stochastic algorithm that depends on the realizations of the sample batches $\bX^1,\bX^2,\hdots\bX^{k-1}$. Consequently, even deterministic functions, such as $F$, become stochastic when evaluated at these points, e.g.\! $F(\x^{k,i})$ is a random quantity. Moreover, successive iterates are statistically dependent, since later iterates are updates of earlier ones. Specifically, let $\mathcal F^{k} = \sigma(\bX^1, \hdots, \bX^k)$ be the $\sigma$-algebra generated by the first $k$ sample batches. At the end of the $i$-th subiteration in the $k$-th block, the first $i$ components of $\x^{k,i}=(\x^{k+1}_1,...,\x_i^{k+1},\x_{i+1}^k,...,\x_p^k)$ have been updated using $\bX^k$ and are thus $\F^{k}$-measurable, whereas the remaining components $\x_{i+1}^k,...,\x_p^k$ depend only on $\bX^1,...,\bX^{k-1}$ and are therefore only $\mathcal{F}^{k-1}$-measurable. To separate the effect of $\bX^k$ from that of $\bX^1,\hdots,\bX^{k-1}$ on an $\F^k$-measurable random variable $h$, it is often useful to invoke the law of total expectation, i.e.
\[
\E_{\bX^1,...,\bX^{k-1},\bX^k}[h] = \E_{\bX^1,\hdots,\bX^{k-1}}\left[\E_{\bX^k}\left[h|\F^{k-1}\right]\right].
\]

\section{Convergence Analysis}\label{section:convergence}

Here we discuss the convergence of \cref{alg:stochastic_als}. Since Problem \eqref{eq:regularized_random_tensor_decomposition} is nonconvex, there can in general be no unique global minimizer. \Cref{thm:convergence} establishes the convergence of the SALS iterates $\x^{k,i}$ to a local minimizer in expectation. Yet, the special structure of the CP problem allows us to forego most of the standard assumptions made in the SGD framework. In fact, we only assume boundedness of the sampled data $\X$. Indeed, we show that the regularity of the component-wise problem, as the Lipschitz constant of the gradient, depends only on the norm of the current iterate $\x^{k,i}$, which is in turn bounded by $\|\X\|$ (\cref{lem:iterates_bnded}).

\subsection{Regularity Estimates}\label{subsection:regularity}

In the following we exploit the multinomial structure of the cost functional to establish component-wise regularity estimates, such as local Lipschitz continuity of the sampled gradient $\nabla_{\x_i}f(\x,\X)$ and of the Newton step $-\left(\nabla^2_{\x_i}f(\x)\right)^{-1}\nabla_{\x_i}f(\x,\X)$, as well as bounded invertibility of the Hessian $\nabla^2_{\x_i}f(\x)$. 

In the proofs below, we will often need to bound a sum of the form $\sum_{i=1}^p \|\x_i\|^q$ in terms of $\|\x\|$, where $\x = (\x_1,...,\x_p)$. For this we use the fact that for finite dimensional vector spaces all norms are equivalent. In particular, for any norm subscripts $s$ and $t$, there exist dimension-dependent constants $0<c_{s,t}<c_{t,s}<\infty$ so that
\begin{equation}
c_{s,t}\|\x\|_{s} \leq \|\x\|_{t} \leq c_{t,s}\|\x_i\|_{s}.
\end{equation} 
Using norm equivalence with $s=2$ and $t=q$, we obtain 
\begin{equation}\label{eq:norm_equivalence}
\sum_{i=1}^p \|\x_i\|^q \leq \sum_{i=1}^p (c_{2,q}^i)^q \|\x_i\|_q^q  \leq (\bar c_{2,q})^q \|\x\|_q^q \leq \left(\frac{\bar c_{2,q}}{c_{q,2}}\right)^q \|\x\|^q,
\end{equation}
where $\displaystyle \bar c_{2,q} = \max_{i=1,...,p} c_{2,q}^i$, while $c_{2,q}^i$ and $c_{q,2}$ are the appropriate constants in the norm equivalence relation above for $\x_i$ and $\x$ respectively.

The estimates established in this section all follow from the local Lipschitz continuity of the mapping $\x_i^*\mapsto \Theta_i$, shown in \cref{lem:difference_of_thetas} below.
\begin{lemma}\label{lem:difference_of_thetas}
Let $\Theta_i$ and $\tilde \Theta_i$ be matrices defined in terms of $\x_{i^*}$ and $\tilde \x_{i^*}$ respectively via \eqref{eq:theta_definition}. Then 
\begin{align}\label{eq:difference_of_thetas}
\|\Theta_i - \tilde \Theta_i\| \leq P_\Theta(\|\x_{i^*}\|,\|\tilde \x_{i^*}\|)  \|\x_{i^*}-\tilde \x_{i^*}\|,
\quad \text{with}
\\ \nonumber
P_\Theta(\|\x_{i^*}\|,\|\tilde \x_{i^*}\|) = C \left( \|\x_{i^*}\|^{p-2}+\|\tilde \x_{i^*}\|^{p-2}\right)
\end{align}
for some constant $0\leq C<\infty$.
\end{lemma}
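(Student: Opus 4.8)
The plan is to exploit the fact that, by \eqref{eq:theta_definition}, $\Theta_i$ is the iterated columnwise Khatri--Rao product of the $p-1$ factor matrices $A_1,\hdots,A_{i-1},A_{i+1},\hdots,A_p$, and that this product is \emph{multilinear} in those matrices: since each column of a Khatri--Rao product is the Kronecker product of the corresponding columns, the map is linear in each factor separately. Relabel the $p-1$ factors appearing in $\Theta_i$ as $B_1,\hdots,B_{p-1}$ (and their tilded counterparts as $\tilde B_1,\hdots,\tilde B_{p-1}$), so that $\Theta_i = B_1\odot\cdots\odot B_{p-1}$. The difference of two multilinear expressions telescopes as
\[
\Theta_i-\tilde\Theta_i = \sum_{l=1}^{p-1} \tilde B_1\odot\cdots\odot\tilde B_{l-1}\odot (B_l-\tilde B_l)\odot B_{l+1}\odot\cdots\odot B_{p-1},
\]
in which each summand carries exactly one difference factor $B_l-\tilde B_l$ and $p-2$ undifferenced factors.

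The key estimate I would establish first is the submultiplicativity of the Khatri--Rao product under the Frobenius norm: for $A=[{\bm a}_1,\hdots,{\bm a}_r]$ and $B=[{\bm b}_1,\hdots,{\bm b}_r]$,
\[
\|A\odot B\|^2 = \sum_{j=1}^r \|{\bm a}_j\otimes {\bm b}_j\|^2 = \sum_{j=1}^r \|{\bm a}_j\|^2\|{\bm b}_j\|^2 \leq \left(\sum_{j=1}^r\|{\bm a}_j\|^2\right)\left(\sum_{j=1}^r\|{\bm b}_j\|^2\right) = \|A\|^2\|B\|^2,
\]
using $\|{\bm a}_j\otimes{\bm b}_j\| = \|{\bm a}_j\|\|{\bm b}_j\|$ and the nonnegativity of the omitted cross terms $\|{\bm a}_j\|^2\|{\bm b}_k\|^2$, $j\neq k$. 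Iterating gives $\|B_1\odot\cdots\odot B_{p-1}\|\leq \prod_m\|B_m\|$, and the analogous bound for each telescoped summand.

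Applying the triangle inequality and this submultiplicativity to the telescoping identity yields
\[
\|\Theta_i-\tilde\Theta_i\| \leq \sum_{l=1}^{p-1}\Big(\prod_{m<l}\|\tilde B_m\|\Big)\,\|B_l-\tilde B_l\|\,\Big(\prod_{m>l}\|B_m\|\Big).
\]
Each $B_m$ is one of the blocks of $\x_{i^*}$ (resp. $\tilde\x_{i^*}$), so $\|B_m\|\leq\|\x_{i^*}\|$, $\|\tilde B_m\|\leq\|\tilde\x_{i^*}\|$, and $\|B_l-\tilde B_l\|\leq\|\x_{i^*}-\tilde\x_{i^*}\|$. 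The product of the $p-2$ undifferenced norms in each summand is therefore at most $\max(\|\x_{i^*}\|,\|\tilde\x_{i^*}\|)^{p-2}$, and since $\max(a,b)^{p-2}\leq a^{p-2}+b^{p-2}$ for $a,b\geq 0$ and $p\geq 2$, summing the $p-1$ terms gives
\[
\|\Theta_i-\tilde\Theta_i\| \leq (p-1)\left(\|\x_{i^*}\|^{p-2}+\|\tilde\x_{i^*}\|^{p-2}\right)\|\x_{i^*}-\tilde\x_{i^*}\|,
\]
which is the claim with $C=p-1$.

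I do not expect a single hard obstacle; the argument is essentially bookkeeping once multilinearity is recognized. The two points requiring care are (i) making the telescoping identity precise, including the factor ordering inherited from \eqref{eq:theta_definition}, and (ii) the submultiplicativity estimate, where one must remember that $\|A\odot B\|\neq\|A\|\|B\|$ in general (unlike the full Kronecker product) but only satisfies the inequality above. The degenerate case $p=2$, where $\Theta_i$ is a single factor matrix and the empty product equals $1$, should be noted as consistent with the stated bound.
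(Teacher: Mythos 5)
Your proof is correct, and its skeleton is the same as the paper's: telescope the difference of the two iterated Khatri--Rao products so that each summand contains exactly one difference factor, then bound each summand via submultiplicativity of $\odot$ under the Frobenius norm. Where you differ is in how the two estimates feeding into this skeleton are executed, and your versions are sharper and more elementary. For the submultiplicativity step, the paper bounds $\sum_j \|{\bm a}_j\|^2\|{\bm b}_j\|^2$ by Cauchy--Schwarz followed by finite-dimensional norm equivalence, picking up a dimension-dependent constant $(\bar c_{2,4}/c_{4,2})^2$; you simply observe that the omitted cross terms are nonnegative, giving the clean inequality $\|A\odot B\|\leq \|A\|\,\|B\|$ with constant $1$. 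For the product of the $p-2$ undifferenced factors, the paper invokes the arithmetic--geometric mean inequality plus another norm-equivalence step (introducing a factor $\frac{1}{p-2}\left(\frac{\bar c_{2,p-2}}{c_{p-2,2}}\right)^{p-2}$, which is undefined at $p=2$), whereas you use the block bound $\|\x_j\|\leq\|\x_{i^*}\|$ together with $\max(a,b)^{p-2}\leq a^{p-2}+b^{p-2}$; likewise you bound each block difference directly by $\|\x_{i^*}-\tilde\x_{i^*}\|$ instead of summing and converting an $\ell^1$-type sum via norm equivalence. The net effect is that your argument yields the explicit, dimension-independent constant $C=p-1$, remains valid in the degenerate case $p=2$ (which the paper's AM--GM step does not handle), and avoids all unspecified equivalence constants --- at no cost in generality, since the paper only needs \emph{some} finite $C$ in later estimates.
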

\begin{proof}
First note that for any  $A\in \R^{n_i\times r}, B\in \R^{n_j\times r}$, properties of Kronecker products, the Cauchy-Schwartz inequality, and Inequality \eqref{eq:norm_equivalence} imply that
\begin{align}
\|A\odot B\|^2 &= \|[\ba_1\otimes \bm{b}_1,\hdots,\ba_r\otimes \bm b_r]\|^2 = \sum_{j=1}^r \|\bm a_j\otimes \bm b_j\|^2 \nonumber \\
&= \sum_{j=1}^r \|\bm a_j\|^2 \|\bm b_j\|^2 \leq \left(\sum_{j=1}^r \|\bm a_j\|^4\right)^{\frac{1}{2}}\left(\sum_{j=1}^r \|\bm b_j\|^4\right)^{\frac{1}{2}} \leq \left(\frac{\bar c_{2,4}}{c_{4,2}}\right)^2 \|A\|^2 \|B\|^2.\label{eq:khatri_rao_norm_inequality}
\end{align}
Let $\bm k = (p,p-1,\hdots,i+1,i-1,\hdots,1)$ denote the indices in the Khatri-Rao product \eqref{eq:theta_definition}. Addition and subtraction of the appropriate cross-terms provides
\[
\Theta_i - \tilde \Theta_i = \bigodot_{j=1}^{p-1}A_{k_j} - \bigodot_{j=1}^{p-1}\tilde A_{k_j} = \sum_{j=1}^{p-1} \bigodot_{j'=1}^{j-1}\tilde A_{k_{j'}} \odot (A_{k_j}-\tilde A_{k_j})\odot \bigodot_{j'=j+1}^{p-1}A_{k_{j'}}.
\]
By \eqref{eq:khatri_rao_norm_inequality}, each term in the sum above can be bounded in norm by 
\begin{align*}
& \left \|\bigodot_{j'=1}^{j-1}\tilde A_{k_{j'}} \odot (A_{k_j}-\tilde A_{k_j})\odot \bigodot_{j'=j+1}^{p-1}A_{k_{j'}}\right \| \\
\leq & \left(\frac{\bar c_{2,4}}{c_{4,2}}\right)^p\left(\prod_{j'=1}^{j-1}\|\tilde A_{k_{j'}}\|\right) \left(\prod_{j'=j+1}^{p-1}\|A_{k_{j'}}\|\right) \|A_{k_j}-\tilde A_{k_j}\|\\
= & \left(\frac{\bar c_{2,4}}{c_{4,2}}\right)^p\left(\prod_{j'=1}^{j-1}\|\tilde \x_{k_{j'}}\|\right) \left(\prod_{j'=j+1}^{p-1}\|\x_{k_{j'}}\|\right) \|\x_{k_j}-\tilde \x_{k_j}\|.
\end{align*} 

By the arithmetic-geometric mean inequality, and the bound in \eqref{eq:norm_equivalence}, we have
\begin{align*}
\left(\prod_{j'=1}^{j-1}\|\tilde \x_{k_{j'}}\|\right)\left(\prod_{j'=j+1}^{p-1}\|\x_{k_{j'}}\|\right) & \leq \frac{1}{(p-2)}\left( \sum_{j'=1}^{j-1}\|\tilde \x_{k_{j'}}\|^{p-2} + \sum_{j'=j+1}^{p-1}\|\x_{k_{j'}}\|^{p-2}\right) \\
& \leq \frac{1}{p-2}\left( \sum_{j'=1}^{p-1}\|\tilde \x_{k_{j'}}\|^{p-2} + \sum_{j'=1}^{p-1}\|\x_{k_{j'}}\|^{p-2}\right) \\
& \leq \frac{1}{p-2}\left(\frac{\bar c_{2,p-2}}{c_{p-2,2}}\right)^{p-2}\left(\|\tilde \x_{i^*}\|^{p-2} + \|\x_{i^*}\|^{p-2}\right).
\end{align*}
Since the upper bound 
\[
P_1(\|\x_{i^*}\|, \|\tilde \x_{i^*}\|) = \frac{1}{p-2}\left(\frac{\bar c_{2,4}}{c_{4,2}}\right)^p\left(\frac{\bar c_{2,p-2}}{c_{p-2,2}}\right)^{p-2}\left(\|\tilde \x_{i^*}\|^{p-2} + \|\x_{i^*}\|^{p-2}\right)
\]
is independent of $j$, it then follows that
\begin{align*}
\|\Theta_i - \tilde \Theta_i\| &= \sum_{j=1}^{p-1} \|\bigodot_{j=1}^{p-1}A_{k_j} - \bigodot_{j=1}^{p-1}\tilde A_{k_j}\| \leq P_1(\|\x_{i^*}\|,\|\tilde\x_{i^*}\|) \sum_{j=1}^{p-1}\|\x_{k_j}-\tilde \x_{k_j}\| \\
& \leq P_1(\|\x_{i^*}\|,\|\tilde\x_{i^*}\|) \left(\frac{\bar c_{2,1}}{c_{1,2}}\right) \|\x_{i^*}-\tilde \x_{i^*}\|,
\end{align*}
which establishes \eqref{eq:difference_of_thetas} with $P_\Theta(\|\x_{i^*}\|,\|\tilde\x_{i^*}\|) = \left(\frac{\bar c_{2,1}}{c_{1,2}}\right) P_1(\|\x_{i^*}\|,\|\tilde\x_{i^*}\|)$.
\end{proof}

In the special case when $\tilde \Theta_i=0$, Inequality \eqref{eq:difference_of_thetas} reduces to
\begin{equation}\label{eq:theta_bound}
\|\Theta_i\| \leq C_\Theta\|\x_{i^*}\|^{p-1}, \qquad \text{with $C_\Theta$ independent of $\x$}.
\end{equation} 

\begin{corollary}\label{cor:difference_of_theta_t_theta}
For $\Theta_i$ and $\tilde \Theta_i$ defined in \eqref{eq:theta_definition},  
\begin{equation}\label{eq:difference_of_theta_t_thetas}
\|\Theta_i^T\Theta_i - \tilde \Theta_i^T \tilde \Theta_i\| \leq P_{\Theta^T\Theta}(\|\x_{i^*}\|,\|\tilde \x_{i^*}\|)  \|\x_{i^*}-\tilde \x_{i^*}\|, \qquad \text{where}
\end{equation}
\begin{equation}
P_{\Theta^T\Theta}(\|\x_{i^*}\|,\|\tilde \x_{i^*}\|) = C_{\Theta^T\Theta} \max\{\|\x_{i^*}\|^{p-1}\!,\|\tilde \x_{i^*}\|^{p-1}\}\left(\|\x_{i^*}\|^{p-2}\! +\|\tilde \x_{i^*}\|^{p-2}\right),
\end{equation}
for some constant $0 \leq C_{\Theta^T\Theta}<\infty$ independent of $\x$ and $\tilde \x$.
\end{corollary}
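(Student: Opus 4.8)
The plan is to reduce this to the two estimates already in hand: the local Lipschitz bound of \cref{lem:difference_of_thetas} on $\|\Theta_i-\tilde\Theta_i\|$ and the growth bound \eqref{eq:theta_bound} on $\|\Theta_i\|$. The algebraic identity that makes this work is the standard ``add and subtract a cross term'' factorization. Specifically, I would insert $\pm\,\Theta_i^T\tilde\Theta_i$ and write
\[
\Theta_i^T\Theta_i - \tilde\Theta_i^T\tilde\Theta_i = \Theta_i^T(\Theta_i - \tilde\Theta_i) + (\Theta_i - \tilde\Theta_i)^T\tilde\Theta_i.
\]

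Next I would apply the triangle inequality together with the submultiplicativity of the Frobenius norm, $\|AB\|\leq\|A\|\|B\|$ (itself a consequence of Cauchy--Schwartz applied to the rows and columns of the product, exactly as in \eqref{eq:khatri_rao_norm_inequality}), and the fact that transposition preserves the Frobenius norm, $\|\Theta_i^T\|=\|\Theta_i\|$. This yields
\[
\|\Theta_i^T\Theta_i - \tilde\Theta_i^T\tilde\Theta_i\| \leq \bigl(\|\Theta_i\| + \|\tilde\Theta_i\|\bigr)\,\|\Theta_i - \tilde\Theta_i\|.
\]

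Then I would substitute the two known bounds: \eqref{eq:theta_bound} gives $\|\Theta_i\| + \|\tilde\Theta_i\| \leq C_\Theta(\|\x_{i^*}\|^{p-1} + \|\tilde\x_{i^*}\|^{p-1})$, while \cref{lem:difference_of_thetas} controls $\|\Theta_i - \tilde\Theta_i\|$ by $C(\|\x_{i^*}\|^{p-2} + \|\tilde\x_{i^*}\|^{p-2})\|\x_{i^*} - \tilde\x_{i^*}\|$. Finally I would use $\|\x_{i^*}\|^{p-1} + \|\tilde\x_{i^*}\|^{p-1} \leq 2\max\{\|\x_{i^*}\|^{p-1},\|\tilde\x_{i^*}\|^{p-1}\}$ and absorb the factor $2C_\Theta C$ into a single constant $C_{\Theta^T\Theta}$, which recovers the stated prefactor $P_{\Theta^T\Theta}$ exactly.

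There is essentially no hard step here; the result is a routine consequence of the preceding lemma. The only points requiring a little care are bookkeeping ones: confirming that the Frobenius norm is genuinely submultiplicative (so that no spectral-versus-Frobenius distinction intervenes), and rewriting the sum of $(p-1)$-st powers as a maximum so that the final bound matches the claimed form rather than merely being comparable to it. No structural input beyond \cref{lem:difference_of_thetas} and \eqref{eq:theta_bound} is needed.
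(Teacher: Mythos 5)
Your proof is correct and takes essentially the same route as the paper: the identical cross-term factorization $\Theta_i^T\Theta_i - \tilde\Theta_i^T\tilde\Theta_i = \Theta_i^T(\Theta_i-\tilde\Theta_i) + (\Theta_i-\tilde\Theta_i)^T\tilde\Theta_i$, followed by \cref{lem:difference_of_thetas} for the difference factor, \eqref{eq:theta_bound} for the growth factor, and the same sum-to-max absorption into a single constant. The only cosmetic difference is that you invoke submultiplicativity of the Frobenius norm directly, whereas the paper routes the same estimate through the induced $2$-norm and norm-equivalence constants; both are valid and yours is marginally cleaner.
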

\begin{proof}
By the equivalence of the induced Euclidean and the Frobenius norms, 
\begin{align*}
\|\Theta_i^T\Theta_i - \tilde \Theta_i^T \tilde \Theta_i\| & \leq c_{F,2}\|\Theta_i^T\Theta_i - \tilde \Theta_i^T \tilde \Theta_i\|_2 = c_{F,2}\|\Theta_i^T(\Theta_i-\tilde \Theta_i) + (\Theta_i-\tilde \Theta_i)^T \tilde \Theta_i\|_2\\
&\leq \frac{c_{F,2}}{(c_{2,F})^2} \left(\|\Theta_i^T\| \|\Theta_i-\tilde \Theta_i\| + \|\Theta_i-\tilde \Theta_i\| \|\tilde \Theta_i\|\right)\\
&\leq \frac{2 c_{F,2}}{(c_{2,F})^2} \max\{\|\Theta_i\|,\|\tilde \Theta_i\|\} \|\Theta_i-\tilde \Theta_i\| \\
&\leq \frac{2 c_{F,2}}{(c_{2,F})^2} \max\{\|\x_{i^*}\|^{p-1},\|\tilde \x_{i^*}\|^{p-1}\} P_\Theta(\|\x_{i^*}\|,\|\tilde \x_{i^*}\|)\|\x_{i^*}-\tilde \x_{i^*}\|.
\end{align*}
\end{proof}
Letting $\tilde \Theta_i = 0$ in \eqref{eq:difference_of_theta_t_thetas}, yields the bound
\begin{equation}\label{eq:theta_t_theta_bounded}
\|\Theta_i^T\Theta_i\| \leq C_{\Theta^T\Theta} \|\x_{i^*}\|^{2(p-1)}. 
\end{equation}

As a first consequence of \cref{lem:difference_of_thetas} and \cref{cor:difference_of_theta_t_theta}, we can obtain an explicit form for the component-wise Lipschitz constant of $\nabla_{\x_i}f(\x)$.
\begin{lemma}\label{lem:compgrad_lipschitz}
For any $\x, \tilde \x \in \R^{rn}$, there exists $P_{L}=P_{L}(\|\x\|,\|\tilde \x_{i^*}\|,\|\X\|)$, given by \eqref{eq:P_L}, so that 
\begin{equation}\label{eq:compgrad_lipschitz}
\|\nabla_{\x_i}f(\x;\X)-\nabla_{\x_i} f(\tilde \x;\X)\| \leq P_{L}(\|\x\|, \|\tilde \x_{i^*}\|, \|\X\|) \|\x - \tilde \x\|.
\end{equation}
\end{lemma}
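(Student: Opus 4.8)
The plan is to work directly with the explicit form of the component-wise gradient in \eqref{eq:componentwise_stochastic_gradient} and estimate the difference term by term, invoking \cref{lem:difference_of_thetas} and \cref{cor:difference_of_theta_t_theta} together with the auxiliary bounds \eqref{eq:theta_bound} and \eqref{eq:theta_t_theta_bounded}. Writing $\x = (\x_i,\x_{i^*})$ and $\tilde\x = (\tilde\x_i,\tilde\x_{i^*})$, I would subtract the two gradients and insert the cross term $\left((\tilde\Theta_i^T\tilde\Theta_i + \lambda I_r)\otimes I_{n_i}\right)\x_i$ to obtain the three-term decomposition
\begin{align*}
\nabla_{\x_i}f(\x;\X) - \nabla_{\x_i}f(\tilde\x;\X)
&= -\left((\Theta_i - \tilde\Theta_i)^T\otimes I_{n_i}\right)\mathrm{vec}(\X_{(i)}) \\
&\quad + \left((\tilde\Theta_i^T\tilde\Theta_i + \lambda I_r)\otimes I_{n_i}\right)(\x_i - \tilde\x_i) \\
&\quad + \left((\Theta_i^T\Theta_i - \tilde\Theta_i^T\tilde\Theta_i)\otimes I_{n_i}\right)\x_i.
\end{align*}
The particular choice of cross term matters: it arranges that the coefficient multiplying $\x_i - \tilde\x_i$ depends only on $\tilde\x_{i^*}$, while the residual $\Theta^T\Theta$ difference multiplies $\x_i$ (bounded by $\|\x\|$), so that the resulting constant depends only on $\|\x\|$, $\|\tilde\x_{i^*}\|$, and $\|\X\|$, exactly as claimed.

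Next I would bound the three terms separately. For the Kronecker factors I would use the spectral-norm identity $\|A\otimes I_{n_i}\|_2 = \|A\|_2$ together with the norm equivalence \eqref{eq:norm_equivalence} to pass between the Frobenius and induced norms, and the fact that $\|\mathrm{vec}(\X_{(i)})\| = \|\X\|$. The first term is then controlled by a constant multiple of $\|\Theta_i - \tilde\Theta_i\|\,\|\X\|$, to which \cref{lem:difference_of_thetas} applies, producing a factor $\left(\|\x_{i^*}\|^{p-2} + \|\tilde\x_{i^*}\|^{p-2}\right)\|\X\|\,\|\x_{i^*} - \tilde\x_{i^*}\|$. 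The second term is estimated by applying \eqref{eq:theta_t_theta_bounded} to $\tilde\Theta_i$ and using the triangle inequality on the $\lambda I_r$ piece, giving $\left(C_{\Theta^T\Theta}\|\tilde\x_{i^*}\|^{2(p-1)} + \lambda\right)\|\x_i - \tilde\x_i\|$. The third term is handled by \cref{cor:difference_of_theta_t_theta} together with $\|\x_i\| \leq \|\x\|$.

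Finally I would collect the estimates, using $\|\x_i - \tilde\x_i\| \leq \|\x - \tilde\x\|$ and $\|\x_{i^*} - \tilde\x_{i^*}\| \leq \|\x - \tilde\x\|$ to factor out a common $\|\x - \tilde\x\|$, and absorb the polynomial prefactors (along with the norm-equivalence constants) into a single explicit function $P_L$ of $\|\x\|$, $\|\tilde\x_{i^*}\|$, and $\|\X\|$, recording its form as \eqref{eq:P_L}. Most of the effort here is organizational rather than conceptual: the one genuine subtlety is the choice of splitting above, which aligns the dependencies of $P_L$ with the statement, while the remainder is careful bookkeeping of the Frobenius-versus-spectral norm conversions in the Kronecker products and of the multinomial degrees $p-1$ and $p-2$ inherited from \cref{lem:difference_of_thetas} and \cref{cor:difference_of_theta_t_theta}.
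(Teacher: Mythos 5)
Your proposal is correct and follows essentially the same route as the paper: the identical cross-term insertion of $\left((\tilde\Theta_i^T\tilde\Theta_i + \lambda I_r)\otimes I_{n_i}\right)\x_i$ yielding the same three-term decomposition, the same use of \cref{lem:difference_of_thetas}, \cref{cor:difference_of_theta_t_theta}, and \eqref{eq:theta_t_theta_bounded} on the respective terms, and the same Kronecker spectral-norm and norm-equivalence bookkeeping to assemble $P_L$. Your observation that this particular splitting is what makes the constant depend only on $\|\x\|$, $\|\tilde\x_{i^*}\|$, and $\|\X\|$ is exactly the point of the paper's argument as well.
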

\begin{proof}
Let $\Theta_i$ and $\tilde \Theta_i$ be constructed from $\x_{i^*}$ and $\tilde \x_{i^*}$ respectively via Equation \cref{eq:theta_definition}. Using \cref{eq:componentwise_stochastic_gradient}, the difference in sampled component-wise gradients is given by
\begin{align*}
&\nabla_{\x_i}f(\x;\X)-\nabla_{\bm x_i} f(\tilde \x;\X) \\
= & \left((\Theta_i^T-\tilde \Theta_i^T)\otimes I_{n_i}\right)\mathrm{vec}(\X_{(i)})  + (\Theta_i^T\Theta_i + \lambda I_r)\otimes I_{n_i}\x_i - (\tilde \Theta_i^T\tilde \Theta_i + \lambda I_r)\otimes I_{n_i}\tilde \x_i \\
= & \left((\Theta_i^T-\tilde \Theta_i^T)\otimes I_{n_i}\right)\mathrm{vec}(\X_{(i)})  + (\tilde \Theta_i^T\tilde \Theta_i + \lambda I_r)\otimes I_{n_i}(\x_i-\tilde \x_i) \\ 
& + (\Theta_i^T\Theta_i - \tilde \Theta_i^T\tilde \Theta_i)\otimes I_{n_i}\bm x_i.
\end{align*}

Since the singular values of a Kronecker product are formed from products of singular values of the constituent matrices,  the matrix norm $\|B\otimes I\|_2 = \|B\|_2 \leq \|B\|$ for any matrix $B$. In light of inequalities \eqref{eq:difference_of_thetas} and \eqref{eq:difference_of_theta_t_thetas}, we therefore have 
\begin{align*}
\;&\left\|\left((\Theta_i^T-\tilde \Theta_i^T)\otimes I_{n_i}\right)\mathrm{vec}(\X_{(i)})+(\Theta_i^T\Theta_i - \tilde \Theta_i^T\tilde \Theta_i)\otimes I_{n_i}\bm x_i\right\| \\
\leq\; &\|\Theta_i-\tilde \Theta_i\|_2 \|\X\| + \|\Theta_i^T\Theta_i - \tilde \Theta_i^T\tilde \Theta_i\|_2 \|\bm x_i\| \\
\leq \;& \big( P_{\Theta}(\|\x_{i^*}\|,\|\tilde \x_{i^*}\|)\|\X\| +  P_{\Theta^T\Theta}(\|\x_{i^*}\|,\|\tilde \x_{i^*}\|)\|\x_i\| \big) \|\x_{i^*}-\tilde \x_{i^*}\|.
\end{align*}

Similarly, Inequality \eqref{eq:theta_t_theta_bounded} yields
\begin{align*}
\;&\|(\tilde \Theta_i^T\tilde \Theta_i + \lambda I_r)\otimes I_{n_i}(\x_i-\tilde \x_i)\|\leq \|\tilde \Theta_i^T\tilde \Theta_i + \lambda I_r\|_2 \|\x_i-\tilde \x_i\|\\
\leq\; &\big(\|\tilde \Theta_i^T\tilde \Theta_i\| + \lambda \big) \|\x_i-\tilde \x_i\| \leq \left(C_{\Theta^T\Theta}\|\tilde \x_{i^*}\|^{2(p-1)}+\lambda\right) \|\x_i-\tilde \x_i\| .
\end{align*}
Hence, by norm equivalence \eqref{eq:norm_equivalence} with $s=1$ and $t=F$, the bound \eqref{eq:compgrad_lipschitz} follows with
\begin{align}\label{eq:P_L}
&P_{L}(\|\x\|, \|\tilde \x_{i^*}\|, \|\X\|) = \left(\frac{\bar c_{F1}}{c_{1F}}\right)\max \left\{P_{L_{i^*}}(\|\x\|,\|\tilde \x_{i^*}\|,\|\X\|),P_{L_i}(\|\tilde \x_{i^*}\|)\right\}, 
\\\nonumber
&\hspace{-3em} \text{with} \quad 
P_{L_i}(\|\tilde \x_{i^*}\|) = C_{\Theta^T\Theta}\|\tilde \x_{i^*}\|^{2(p-1)}+\lambda
\\ \nonumber
&\hspace{-3em} \text{and}\quad
P_{L_{i^*}}(\|\x\|,\|\tilde \x_{i^*}\|,\|\X\|) =  P_{\Theta}(\|\x_{i^*}\|,\|\tilde \x_{i^*}\|)\|\X\| \!+\!  P_{\Theta^T\Theta}(\|\x_{i^*}\|,\|\tilde \x_{i^*}\|)\|\x_i\|.\hspace{-1em}
\end{align}
\end{proof}
The following is a simplified upper bound in the special case when $\x$ and $\tilde \x$ only differ in their $i$-th block components
\begin{corollary}\label{cor:compgrad_comp_lipschitz}
For any $\x=(\x_1,\hdots,\x_p)=(\x_i,\x_{i^*}) \in \R^{rn}$, we have
\begin{equation}
\|\nabla_{\x_i} f(\x_i,\x_{i^*};\X)-\nabla_{\x_i}f(\tilde \x_i,\x_{i^*};\X)\| \leq P_{L_i}(\|\tilde \x_{i^*}\|) \|\x_i-\tilde \x_i\|,
\end{equation}
\end{corollary}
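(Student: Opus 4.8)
The plan is to exploit the fact that the component-wise gradient \eqref{eq:componentwise_stochastic_gradient} is \emph{affine} in the block $\x_i$ once the remaining components $\x_{i^*}$ are held fixed. The crucial observation is that here $\x$ and $\tilde\x$ differ only in their $i$-th block, so $\x_{i^*} = \tilde\x_{i^*}$; and since $\Theta_i$ depends only on $\x_{i^*}$ (as noted after \eqref{eq:theta_definition}), we have $\Theta_i = \tilde\Theta_i$. This is the single ingredient that makes the estimate collapse to the simple form claimed.

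First I would subtract the two gradients using \eqref{eq:componentwise_stochastic_gradient}. Because $\Theta_i = \tilde\Theta_i$, the data-dependent term $-(\Theta_i^T\otimes I_{n_i})\mathrm{vec}(\X_{(i)})$ cancels entirely, leaving
\[
\nabla_{\x_i} f(\x_i,\x_{i^*};\X)-\nabla_{\x_i}f(\tilde \x_i,\x_{i^*};\X) = \big((\Theta_i^T\Theta_i+\lambda I_r)\otimes I_{n_i}\big)(\x_i-\tilde \x_i),
\]
which is simply the Hessian \eqref{eq:componentwise_stochastic_hessian} applied to $\x_i - \tilde \x_i$.

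Next I would take Euclidean norms and use sub-multiplicativity. Since Kronecker multiplication by an identity preserves the spectral norm, $\|(\Theta_i^T\Theta_i + \lambda I_r)\otimes I_{n_i}\|_2 = \|\Theta_i^T\Theta_i + \lambda I_r\|_2$. The triangle inequality together with $\|\cdot\|_2 \leq \|\cdot\|$ then gives $\|\Theta_i^T\Theta_i + \lambda I_r\|_2 \leq \|\Theta_i^T\Theta_i\| + \lambda$, and invoking the bound \eqref{eq:theta_t_theta_bounded} yields $\|\Theta_i^T\Theta_i\| \leq C_{\Theta^T\Theta}\|\x_{i^*}\|^{2(p-1)}$. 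Because $\x_{i^*} = \tilde \x_{i^*}$, this is precisely $P_{L_i}(\|\tilde \x_{i^*}\|) = C_{\Theta^T\Theta}\|\tilde \x_{i^*}\|^{2(p-1)} + \lambda$, which completes the estimate.

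I do not anticipate any genuine obstacle: this corollary is a clean specialization of \cref{lem:compgrad_lipschitz}, and the only point requiring care is recognizing the cancellation of the $\X$-dependent term, which hinges on the independence of $\Theta_i$ from $\x_i$. Indeed, one could alternatively read the result directly off \eqref{eq:P_L} by setting the $P_{L_{i^*}}$ contribution to zero (since $\x_{i^*} - \tilde \x_{i^*} = 0$), leaving only the $P_{L_i}$ branch.
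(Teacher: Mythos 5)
Your proposal is correct and follows essentially the same route as the paper: both exploit $\x_{i^*}=\tilde\x_{i^*}$ (hence $\Theta_i=\tilde\Theta_i$) so that the estimate of \cref{lem:compgrad_lipschitz} collapses to $\|((\Theta_i^T\Theta_i+\lambda I_r)\otimes I_{n_i})(\x_i-\tilde\x_i)\| \leq P_{L_i}(\|\x_{i^*}\|)\|\x_i-\tilde\x_i\|$. Your write-up merely spells out the cancellation of the $\X$-dependent term and the Kronecker spectral-norm step, which the paper leaves implicit in the lemma's proof.
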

\begin{proof}
Since $\x_{i^*}=\tilde \x_{i^*}$, estimate \eqref{eq:compgrad_lipschitz} collapses to
\[
\|(\Theta_i^T \Theta_i + \lambda I_r)\otimes I_{n_i}(\x_i-\tilde \x_i)\| \leq P_{L_i}(\|\x_{i^*}\|)\|\x_i-\tilde \x_i\|.
\]
\end{proof}

We may also bound the norm of the component-wise gradient in terms of the norm of $\x = (\x_i,\x_{i^*})$ and the data $\X$, as shown in the following corollary. 

\begin{corollary}\label{cor:bnd_gradient_norm}
For any $\x = (\x_i,\x_{i^*}) \in \R^{rn}$, 
\begin{equation}\label{eq:bnd_gradient_norm}
\|\nabla_{\x_i} f(\x;\X)\| \leq P_{L}(\|\x\|,0,\|\X\|)\|\x_i\|,
\end{equation}
where $P_L$ is given by \eqref{eq:P_L}.
\end{corollary}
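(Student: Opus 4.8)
The plan is to read the bound off the closed form of the component-wise gradient \eqref{eq:componentwise_stochastic_gradient} directly, rather than through a telescoping comparison, by splitting it into the regularized ``normal-equation'' part and the data part,
\[
\nabla_{\x_i}f(\x;\X)=\big((\Theta_i^T\Theta_i+\lambda I_r)\otimes I_{n_i}\big)\x_i-(\Theta_i^T\otimes I_{n_i})\mathrm{vec}(\X_{(i)}),
\]
and bounding each piece so that the sum carries the single factor $\|\x_i\|$ with coefficient $P_L(\|\x\|,0,\|\X\|)$. Observe that the target coefficient is exactly the Lipschitz constant of \cref{lem:compgrad_lipschitz} evaluated at the anchor $\tilde\x=0$; since $\Theta_i$ built from $\x_{i^*}=0$ is a Khatri--Rao product containing a zero factor, we have $\Theta_i=0$ there and hence $\nabla_{\x_i}f(0;\X)=0$, which identifies $0$ as the natural base point and fixes the second argument of $P_L$.

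The first piece is immediate. Using the Kronecker-norm identity $\|B\otimes I\|_2=\|B\|_2\le\|B\|$ together with \eqref{eq:theta_t_theta_bounded}, the regularized term is bounded by
\[
\big(\|\Theta_i^T\Theta_i\|+\lambda\big)\|\x_i\|\le\big(C_{\Theta^T\Theta}\|\x_{i^*}\|^{2(p-1)}+\lambda\big)\|\x_i\|=P_{L_i}(\|\x_{i^*}\|)\|\x_i\|,
\]
with $P_{L_i}$ as in \eqref{eq:P_L}. This is the contribution that natively carries $\|\x_i\|$, and up to the norm-equivalence prefactor $\bar c_{F1}/c_{1F}$ it is dominated by the $\max$ defining $P_L(\|\x\|,0,\|\X\|)$.

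The second piece is the data term $-(\Theta_i^T\otimes I_{n_i})\mathrm{vec}(\X_{(i)})$, which is precisely $\nabla_{\x_i}f(0,\x_{i^*};\X)$. By \eqref{eq:theta_bound} its norm is at most $\|\Theta_i\|\,\|\X\|\le C_\Theta\|\x_{i^*}\|^{p-1}\|\X\|$. I would then rewrite $C_\Theta\|\x_{i^*}\|^{p-1}=\big(C_\Theta\|\x_{i^*}\|^{p-2}\big)\|\x_{i^*}\|$ to recognize the factor $C_\Theta\|\x_{i^*}\|^{p-2}\|\X\|$ as a rescaling of the data piece $P_\Theta(\|\x_{i^*}\|,0)\|\X\|$ appearing in $P_{L_{i^*}}(\|\x\|,0,\|\X\|)$ from \eqref{eq:P_L}, so that this contribution is matched against the first argument of the $\max$.

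The main obstacle is the final assembly. After the two estimates the bound reads $P_{L_i}(\|\x_{i^*}\|)\|\x_i\|+C_\Theta\|\x_{i^*}\|^{p-1}\|\X\|$, and the data residual carries a factor $\|\x_{i^*}\|$ rather than the $\|\x_i\|$ demanded on the right-hand side. The delicate step, where I expect essentially all of the effort to lie, is to absorb this residual into $P_{L_{i^*}}(\|\x\|,0,\|\X\|)\|\x_i\|$: concretely, one must control $\|\x_{i^*}\|^{p-1}\|\X\|$ by the $\|\x_i\|$-weighted budget, tracking how the norm-equivalence constants of \eqref{eq:norm_equivalence} and the splitting $\|\x\|^2=\|\x_i\|^2+\|\x_{i^*}\|^2$ distribute between the off-block and $i$-th-block factors so that both contributions can be collected under the common coefficient $P_L(\|\x\|,0,\|\X\|)$.
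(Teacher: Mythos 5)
You have correctly isolated the crux, but the step you defer to the end is not merely delicate---it is impossible, and no distribution of norm-equivalence constants can rescue it. The data term $-(\Theta_i^T\otimes I_{n_i})\mathrm{vec}(\X_{(i)})$, which you yourself identify as $\nabla_{\x_i}f(\bm 0,\x_{i^*};\X)$, is generically nonzero at points with $\x_i=\bm 0$ and $\x_{i^*}\neq\bm 0$, while the claimed right-hand side $P_L(\|\x\|,0,\|\X\|)\,\|\x_i\|$ vanishes there. Concretely, take $p=3$, $r=1$, $n_1=n_2=n_3=1$, $\x=(0,1,1)$, $\X=1$: then $\nabla_{x_1}f=-(\X-x_1x_2x_3)x_2x_3+\lambda x_1=-1$, but $\|\x_1\|=0$. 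So the residual $C_\Theta\|\x_{i^*}\|^{p-1}\|\X\|$ can never be absorbed into an $\|\x_i\|$-weighted budget, because the inequality as printed is false: the factor $\|\x_i\|$ in \eqref{eq:bnd_gradient_norm} is a misprint for $\|\x\|$. This is confirmed by the paper's own one-line proof, which sets $\tilde\x=\bm 0$ in \eqref{eq:compgrad_lipschitz} (so $\tilde\Theta_i=0$ and $\nabla_{\x_i}f(\bm 0;\X)=\bm 0$) and therefore delivers $\|\nabla_{\x_i}f(\x;\X)\|\leq P_L(\|\x\|,0,\|\X\|)\,\|\x-\bm 0\|=P_L(\|\x\|,0,\|\X\|)\,\|\x\|$.

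With $\|\x\|$ in place of $\|\x_i\|$, your direct computation closes immediately and is essentially the Lipschitz argument unrolled: the data term pairs with the increment $\|\x_{i^*}\|\leq\|\x\|$, since $C_\Theta\|\x_{i^*}\|^{p-1}\|\X\|=P_\Theta(\|\x_{i^*}\|,0)\|\X\|\cdot\|\x_{i^*}\|$ by \eqref{eq:theta_bound}, and the remaining terms pair with $\|\x_i\|\leq\|\x\|$. One bookkeeping correction even then: in $P_L(\|\x\|,0,\|\X\|)$ from \eqref{eq:P_L} the second branch of the $\max$ is $P_{L_i}(\|\tilde\x_{i^*}\|)=P_{L_i}(0)=\lambda$, so your bound $P_{L_i}(\|\x_{i^*}\|)\,\|\x_i\|$ for the normal-equation piece is not dominated by that branch; in the accounting of \cref{lem:compgrad_lipschitz} the $\Theta_i^T\Theta_i\,\x_i$ portion instead lives inside $P_{L_{i^*}}(\|\x\|,0,\|\X\|)$ as $P_{\Theta^T\Theta}(\|\x_{i^*}\|,0)\|\x_i\|$ riding with the increment $\|\x_{i^*}\|$, and only $\lambda\x_i$ pairs with $\|\x_i\|$. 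Finally, the misprint is harmless downstream: in \cref{thm:convergence} the corollary is used only together with \cref{cor:x_bounds}, so $M_{\nabla f}$ becomes $P_L(M_\x,0,M)M_\x$ rather than $P_L(M_\x,0,M)M_{\x_i}$, a change of constant by at most $\sqrt{p}$ that affects nothing in the convergence proof.
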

\begin{proof}
To get inequality \eqref{eq:bnd_gradient_norm} set $\tilde \x=\bm 0$ in \eqref{eq:compgrad_lipschitz} and note that ${\nabla_{\x_i}f(\bm 0;\X) = \bm 0}$.
\end{proof}
\Cref{cor:difference_of_theta_t_theta} also implies the following bounds on the component-wise Hessian.
\begin{corollary}\label{cor:hessian_posdef}
For any $\x = (\x_i,\x_{i^*}) \in \R^{rn}$ and ${\bm v}_i \in \R^{rn_i}$, 
\begin{equation}\label{eq:hessian_posdef}
\lambda \|\bm{v}_i\|^2 \leq \bm{v}_i^T \nabla_{\x_i}^2 f(\x) \bm{v}_i \leq P_{L_i}(\|\x_{i^*}\|) \|\bm{v}_i\|^2.
\end{equation}

\end{corollary}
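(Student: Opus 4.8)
The plan is to work directly from the explicit form of the component-wise Hessian in \eqref{eq:componentwise_stochastic_hessian}, namely $\nabla_{\x_i}^2 f(\x) = (\Theta_i^T\Theta_i + \lambda I_r)\otimes I_{n_i}$, and to estimate the extreme values of its associated quadratic form by bounding its eigenvalues. The central observation is that both inequalities follow from the spectral structure of the Kronecker product: since the eigenvalues of $M\otimes I_{n_i}$ are exactly the eigenvalues of $M$, each repeated $n_i$ times, the spectrum of $(\Theta_i^T\Theta_i+\lambda I_r)\otimes I_{n_i}$ coincides (up to multiplicity) with that of $\Theta_i^T\Theta_i + \lambda I_r$. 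It then suffices to control the smallest and largest eigenvalues of the latter $r\times r$ matrix.

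For the lower bound I would first note that $\Theta_i^T\Theta_i$ is a Gram matrix and hence positive semidefinite, so all of its eigenvalues are nonnegative. Consequently the eigenvalues of $\Theta_i^T\Theta_i + \lambda I_r$ are each at least $\lambda$, and the same holds after taking the Kronecker product with $I_{n_i}$. This immediately yields $\bm{v}_i^T \nabla_{\x_i}^2 f(\x)\,\bm{v}_i \geq \lambda\|\bm{v}_i\|^2$. The feature worth emphasizing is that this lower bound $\lambda$ is independent of both the remaining components $\x_{i^*}$ and the data $\X$, which is precisely the uniform positive-definiteness invoked elsewhere in the paper.

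For the upper bound I would bound the largest eigenvalue of $\Theta_i^T\Theta_i + \lambda I_r$ by $\|\Theta_i^T\Theta_i\|_2 + \lambda$. Using that the induced $2$-norm is dominated by the Frobenius norm together with the bound \eqref{eq:theta_t_theta_bounded}, i.e.\ $\|\Theta_i^T\Theta_i\| \leq C_{\Theta^T\Theta}\|\x_{i^*}\|^{2(p-1)}$, gives an upper bound of $C_{\Theta^T\Theta}\|\x_{i^*}\|^{2(p-1)} + \lambda$, which is exactly $P_{L_i}(\|\x_{i^*}\|)$ as defined in \eqref{eq:P_L}. The upper estimate on the quadratic form then follows at once.

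There is no genuine obstacle here: the result is an essentially immediate consequence of \eqref{eq:theta_t_theta_bounded} together with elementary facts about Gram matrices and Kronecker products. The only point requiring minor care is to track that the spectral ($2$-norm) bound used for the upper estimate is correctly related to the Frobenius-norm bound established in \cref{cor:difference_of_theta_t_theta}, since the paper uses the Frobenius norm as its default $\|\cdot\|$.
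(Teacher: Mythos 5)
Your proposal is correct and follows essentially the same route as the paper: both rest on the Kronecker structure of the Hessian \eqref{eq:componentwise_stochastic_hessian}, the positive semidefiniteness of the Gram matrix $\Theta_i^T\Theta_i$, and the bound \eqref{eq:theta_t_theta_bounded} to obtain $\|\Theta_i^T\Theta_i\| + \lambda \leq P_{L_i}(\|\x_{i^*}\|)$. If anything, your eigenvalue argument is slightly more careful than the paper's one-line proof, which phrases both bounds in terms of $\|\nabla_{\x_i}^2 f(\x)\|_2$ even though the lower bound in \eqref{eq:hessian_posdef} really requires the smallest eigenvalue to be at least $\lambda$ --- exactly the point you make explicit.
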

\begin{proof}
This follows directly from the fact that
\[
\lambda \leq \|\nabla_{\x_i}^2 f(\x)\|_2 \leq \|\Theta_i^T \Theta_i\| + \lambda \leq P_{L_i}(\| \x_{i^*}\|).
 \]
\end{proof}

Finally, we establish the local Lipschitz continuity of the Newton step mapping $\x\mapsto(\nabla_{\x_i}^2 f(\x))^{-1} \nabla_{\x_i}f(\x;\X)$. 

\begin{lemma}\label{lem:newton_step_lipschitz}
For any $\x, \bm y \in \R^{rn}$, we have
\begin{multline}\label{eq:newton_step_lipschitz}
\|(\nabla_{\x_i}^2 f(\x))^{-1} \nabla_{\x_i}f(\x;\X)-(\nabla_{\x_i}^2 f(\tilde \x))^{-1} \nabla_{\x_i}f(\tilde \x;\X)\| \\
\leq P_N(\|\x\|,\|\tilde \x_{i^*}\|,\|\X\|) \|\x-\tilde \x\|,
\end{multline}
where $P_N$ is given by \eqref{eq:P_N}.

\end{lemma}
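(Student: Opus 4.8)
The plan is to treat the Newton step as a product of the inverse component Hessian and the component gradient, and to bound its increment via a telescoping split together with the resolvent identity. Write $H(\x):=\nabla_{\x_i}^2 f(\x)=(\Theta_i^T\Theta_i+\lambda I_r)\otimes I_{n_i}$ for the component Hessian, and recall the gradient $\nabla_{\x_i}f(\x;\X)$ from \eqref{eq:componentwise_stochastic_gradient}. The crucial modeling decision is which cross term to insert: I would add and subtract $H(\tilde\x)^{-1}\nabla_{\x_i}f(\x;\X)$, the inverse Hessian at $\tilde\x$ applied to the gradient at $\x$, giving
\begin{align*}
H(\x)^{-1}\nabla_{\x_i}f(\x;\X)-H(\tilde\x)^{-1}\nabla_{\x_i}f(\tilde\x;\X)
&= \left(H(\x)^{-1}-H(\tilde\x)^{-1}\right)\nabla_{\x_i}f(\x;\X) \\
&\quad + H(\tilde\x)^{-1}\left(\nabla_{\x_i}f(\x;\X)-\nabla_{\x_i}f(\tilde\x;\X)\right).
\end{align*}
The point of this particular split is that the surviving uncancelled gradient is the one at $\x$, whose norm \cref{cor:bnd_gradient_norm} controls purely in terms of $\|\x\|$ and $\|\X\|$; had I instead kept the gradient at $\tilde\x$, I would have incurred a dependence on $\|\tilde\x_i\|$, which is not among the permitted arguments of $P_N$.

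For the second term I would use $\|H(\tilde\x)^{-1}\|_2\le 1/\lambda$, from the lower bound in \cref{cor:hessian_posdef}, together with the local Lipschitz continuity of the gradient from \cref{lem:compgrad_lipschitz}, so that this term is at most $\frac{1}{\lambda}P_{L}(\|\x\|,\|\tilde\x_{i^*}\|,\|\X\|)\|\x-\tilde\x\|$. For the first term I would apply the resolvent identity $H(\x)^{-1}-H(\tilde\x)^{-1}=H(\x)^{-1}(H(\tilde\x)-H(\x))H(\tilde\x)^{-1}$, bound both inverse factors by $1/\lambda$ (again \cref{cor:hessian_posdef}), use $\|H(\tilde\x)-H(\x)\|_2=\|\Theta_i^T\Theta_i-\tilde\Theta_i^T\tilde\Theta_i\|_2\le\|\Theta_i^T\Theta_i-\tilde\Theta_i^T\tilde\Theta_i\|$ followed by \cref{cor:difference_of_theta_t_theta}, and finally bound $\|\nabla_{\x_i}f(\x;\X)\|\le P_{L}(\|\x\|,0,\|\X\|)\|\x_i\|$ by \cref{cor:bnd_gradient_norm}. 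Since $\|\x_{i^*}-\tilde\x_{i^*}\|\le\|\x-\tilde\x\|$ and $\|\x_i\|\le\|\x\|$, this term is at most $\frac{1}{\lambda^2}P_{\Theta^T\Theta}(\|\x_{i^*}\|,\|\tilde\x_{i^*}\|)\,P_{L}(\|\x\|,0,\|\X\|)\,\|\x\|\,\|\x-\tilde\x\|$.

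Adding the two contributions and collecting prefactors yields \eqref{eq:newton_step_lipschitz} with
\[
P_N(\|\x\|,\|\tilde\x_{i^*}\|,\|\X\|)=\frac{1}{\lambda}P_{L}(\|\x\|,\|\tilde\x_{i^*}\|,\|\X\|)+\frac{1}{\lambda^2}P_{\Theta^T\Theta}(\|\x_{i^*}\|,\|\tilde\x_{i^*}\|)\,P_{L}(\|\x\|,0,\|\X\|)\,\|\x\|,
\]
which depends only on $\|\x\|$, $\|\tilde\x_{i^*}\|$, and $\|\X\|$, as claimed. The computation is routine once the estimates of \Cref{subsection:regularity} are in hand; the only genuine subtlety, and the step I would be most careful about, is the choice of cross term above, since the naive symmetric split would produce a bound depending on $\|\tilde\x\|$ rather than on $\|\tilde\x_{i^*}\|$ alone, contradicting the stated form of $P_N$.
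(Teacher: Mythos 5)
Your proposal is correct and follows essentially the same route as the paper's proof: the identical cross-term split (inserting $H(\tilde\x)^{-1}\nabla_{\x_i}f(\x;\X)$), the second resolvent identity for the inverse-Hessian difference, and the same invocations of \cref{cor:hessian_posdef}, \cref{cor:difference_of_theta_t_theta}, \cref{cor:bnd_gradient_norm}, and \cref{lem:compgrad_lipschitz}. The only (inessential) difference is that you combine the two contributions by summing them, while the paper takes a maximum together with a norm-equivalence constant; both yield a valid $P_N$ of the stated form.
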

\begin{proof}
Let $\x,\tilde \x\in \R^{rn}$. By adding and subtracting a cross-term, we obtain
\begin{align*}
&\phantom{+}\|(\nabla_{\x_i}^2 f(\x))^{-1} \nabla_{\x_i}f(\x;\X)-(\nabla_{\x_i}^2 f(\tilde \x_i))^{-1} \nabla_{\x_i}f(\tilde \x;\X)\|\\[0.5em]
\leq & \phantom{+} \|(\nabla_{\x_i}^2 f(\x))^{-1} -(\nabla_{\tilde \x_i}^2 f(\tilde \x))^{-1} \|_2 \|\nabla_{\x_i}f(\x;\X)\| \\[0.5em]
&+ \|(\nabla_{\x_i}^2 f(\tilde \x))^{-1}\|_2 \| \nabla_{\x_i}f(\x;\X)-\nabla_{\x_i}f(\tilde \x;\X)\|.
\end{align*}
By the second resolvent identity (see e.g. Theorem 4.8.2. in \cite{Hille1974}), we have 
\begin{align*}
(\nabla_{\x_i}^2 f(\x))^{-1} -(\nabla_{\tilde \x_i}^2 f(\tilde \x))^{-1} &= (\Theta_i^T\Theta_i + \lambda I_r)^{-1} - (\tilde \Theta_i^T \tilde \Theta_i + \lambda I_r)^{-1} \\
&= (\Theta_i^T\Theta_i + \lambda I_r)^{-1}(\tilde \Theta_i\tilde \Theta_i - \Theta_i^T\Theta_i)(\tilde \Theta_i^T \tilde \Theta_i + \lambda I_r)^{-1},
\end{align*}
so that, by virtue of \eqref{eq:difference_of_theta_t_thetas}, \eqref{eq:bnd_gradient_norm}, and \eqref{eq:hessian_posdef},
\begin{align*}
& \|(\nabla_{\x_i}^2 f(\x))^{-1} -(\nabla_{\tilde \x_i}^2 f(\tilde \x))^{-1} \|_2 \|\nabla_{\x_i}f(\x;\X)\| \\
\leq &\frac{1}{\lambda^2} \|\x_i\|P_L(\|\x\|,0,\|\X\|)P_{\Theta^T\Theta}(\|\x_{i^*}\|,\|\tilde \x_{i^*}\|)\|\x_{i^*}-\tilde \x_{i^*}\|.
\end{align*}
Moreover, \eqref{eq:compgrad_lipschitz} and \eqref{eq:hessian_posdef} imply
\begin{align*}
\|(\nabla_{\x_i}^2 f(\tilde \x))^{-1}\|_2 \| \nabla_{\x_i}f(\x;\X)-\nabla_{\x_i}f(\tilde \x;\X)\| \leq \frac{1}{\lambda}P_{L}(\|\x\|,\|\tilde \x_{i^*}\|, \|\X\|)\|\x_i-\tilde \x_i\|.
\end{align*}
Combining these estimates gives
\[
\|(\nabla_{\x_i}^2 f(\x))^{-1} \nabla_{\x_i}f(\x;\X)-(\nabla_{\x_i}^2 f(\tilde \x_i))^{-1} \nabla_{\x_i}f(\tilde \x;\X)\|\leq P_{N} \|\x-\tilde \x\|,
\quad \text{with}\]
\begin{equation}\label{eq:P_N}
\begin{split}
P_N(\|\x\|,\|\tilde \x_{i^*}\|,\|\X\|) = \left(\frac{\bar c_{2,1}}{c_{1,2}}\right)\max\left\{ P_2, P_3\right\},
\end{split}
\end{equation}
where $\bar c_{2,1}$ and $c_{1,2}$ are appropriate norm equivalence constants,
\begin{align*}
P_2(\|\x\|,\|\tilde \x_{i^*}\|,\|\X\|) &= \frac{1}{\lambda^2} \|\x_i\|P_L(\|\x\|,0,\|\X\|)P_{\Theta^T\Theta}(\|\x_{i^*}\|,\|\tilde \x_{i^*}\|), \qquad \text{and} \\
P_3(\|\x\|,\|\tilde \x_{i^*}\|,\|\X\|) &= \frac{1}{\lambda}P_{L}(\|\x\|,\|\tilde \x_{i^*}\|, \|\X\|).
 \end{align*}
\end{proof}

\subsection{Boundedness of the Data}\label{subsection:boundedness}

In this section we bound the norm of the component iterates $\x_i^{k}$ in terms of the norm of the initial guess and maximum value of the sequence of sample averages of the norms of $\X$.

\begin{lemma}\label{lem:iterates_bnded}
The iterates $\x^{k,i}$ generated by \cref{alg:stochastic_als} satisfy
\[
\E\left[\|\x_i^{k+1}\|\right]\leq \max \left\{\|\x_i^1\|,R(\bX^{k}),\hdots,R(\bX^1)\right\}
\]
for each $k=1,2\hdots$, and $i=1,\hdots,p$, where $R(\bX^k)$ is given by \eqref{eq:bnd_minimizer}.
\end{lemma}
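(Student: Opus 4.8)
The plan is to recognize that each SALS subiteration is a damped exact Newton step for the quadratic $\tilde f(\cdot\,;\bX^k)$ in its $i$-th block, so that the updated block is a convex combination of the previous iterate and the sample block minimizer $\hat\x_i^{k+1}$ from \eqref{eq:componentwise_minimizer}. Since $(H^{k,i})^{-1}\tilde\g^{k,i} = \x_i^k - \hat\x_i^{k+1}$, the update \eqref{eq:x_update_sals} reads
\[ \x_i^{k+1} = (1-\alpha^{k,i})\,\x_i^k + \alpha^{k,i}\,\hat\x_i^{k+1}. \]
For $k\ge 2$ one has $\alpha^{k,i}=c^{k,i}/k\le\cmax/k\le 1$, so this is a genuine convex combination and
\[ \|\x_i^{k+1}\| \le (1-\alpha^{k,i})\|\x_i^k\| + \alpha^{k,i}\|\hat\x_i^{k+1}\| \le \max\{\|\x_i^k\|,\|\hat\x_i^{k+1}\|\}. \]

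First I would bound $\|\hat\x_i^{k+1}\|$ by a quantity $R(\bX^k)$ that depends only on the current batch and, decisively, not on the remaining blocks $\x_{i^*}$. The batch analogue of the stationarity condition \eqref{eq:sample_fonop}, with $\overline{\X}^k := \tfrac{1}{m_k}\sum_{l=1}^{m_k}\X^{k,l}$, gives the minimizing factor matrix
\[ \hat A_i = \overline{\X}^k_{(i)}\,\Theta_i\bigl(\Theta_i^T\Theta_i+\lambda I_r\bigr)^{-1}. \]
Diagonalizing, the singular values of $\Theta_i(\Theta_i^T\Theta_i+\lambda I_r)^{-1}$ are $\sigma/(\sigma^2+\lambda)$, maximal at $\sigma=\sqrt\lambda$, so $\|\Theta_i(\Theta_i^T\Theta_i+\lambda I_r)^{-1}\|_2\le 1/(2\sqrt\lambda)$ independently of $\x_{i^*}$. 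Combined with $\|\overline{\X}^k_{(i)}\|_2\le\|\overline{\X}^k\|$ (Frobenius invariance under matricization) and the equivalence of the Frobenius and induced $2$-norms on $\R^{n_i\times r}$, this produces $\|\hat\x_i^{k+1}\|=\|\hat A_i\|\le R(\bX^k)$, where $R(\bX^k)$ is a fixed multiple of $\|\overline{\X}^k\|$. The regularization $\lambda>0$ is exactly what makes this spectral bound, and hence $R$, uniform in the other blocks.

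With these two estimates I would form the pathwise recursion $\|\x_i^{k+1}\|\le\max\{\|\x_i^k\|,R(\bX^k)\}$ and unroll it: since $\x_i^k$ is the block produced at the end of iteration $k-1$, descending the chain $k,k-1,\ldots,1$ yields
\[ \|\x_i^{k+1}\| \le \max\bigl\{\|\x_i^1\|,\,R(\bX^k),\ldots,R(\bX^1)\bigr\}, \]
which holds almost surely and hence gives the claimed bound after taking expectations.

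The crux is the $\x_{i^*}$-free bound on the block minimizer in the second step: only because $R(\bX^k)$ is free of the other blocks does the recursion telescope into a single maximum instead of accumulating a contribution from every block at every subiteration. The remaining delicate point is the base case $k=1$, where $\alpha^{1,i}=c^{1,i}$ may exceed $1$ (up to $\cmax\le 2$) and the update ceases to be a convex combination; this first step must be bounded separately, or its contribution absorbed into $R(\bX^1)$, before the induction can begin.
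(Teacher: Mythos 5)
Your proposal is correct and follows the same scaffolding as the paper's proof---rewrite the update \eqref{eq:x_update_sals} as a combination of $\x_i^k$ and the sample block minimizer $\hat\x_i^{k+1}$, bound that minimizer by a radius that is independent of the other blocks, and unroll the recursion pathwise---but your key technical step is genuinely different. The paper never touches the closed form of $\hat\x_i^{k+1}$: it compares $\tilde f$ at the minimizer with $\tilde f$ at $\x_i={\bm 0}$ and invokes the coercivity bound \eqref{eq:bound_normx_by_f}, which produces $\|\hat\x_i^{k+1}\|\leq R(\bX^k)$ with $R$ exactly as defined in \eqref{eq:bnd_minimizer}; that argument uses nothing but optimality and the $\tfrac{\lambda}{2}\|\x\|^2$ term, so it would survive, say, a non-quadratic regularizer or a constrained subproblem. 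You instead exploit the linear least-squares structure, writing $\hat A_i=\overline{\X}^k_{(i)}\Theta_i(\Theta_i^T\Theta_i+\lambda I_r)^{-1}$ and bounding the spectral norm of $\Theta_i(\Theta_i^T\Theta_i+\lambda I_r)^{-1}$ by $1/(2\sqrt{\lambda})$, uniformly in $\x_{i^*}$; this is also correct (and slightly sharper---note $\|AB\|\leq\|A\|\,\|B\|_2$ makes the norm-equivalence constants unnecessary, giving $\|\hat\x_i^{k+1}\|\leq\|\overline{\X}^k\|/(2\sqrt{\lambda})$ outright), but it yields a different radius than \eqref{eq:bnd_minimizer}. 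To conclude the lemma \emph{as stated} you need the one-line bridge $\|\overline{\X}^k\|/(2\sqrt{\lambda})\leq\sqrt{\tfrac{1}{\lambda m_k}\sum_{l}\|\X^{k,l}\|^2}=R(\bX^k)$, which follows from the triangle inequality and Cauchy--Schwarz. Two further remarks: your placement of the coefficients in the convex combination, $\x_i^{k+1}=(1-\alpha^{k,i})\x_i^k+\alpha^{k,i}\hat\x_i^{k+1}$, is the correct one (the paper's display swaps them, harmlessly for the bound); and your closing caveat about $k=1$ is well taken and applies equally to the paper, whose chain of inequalities silently uses $|\alpha^{k,i}|+|1-\alpha^{k,i}|\leq 1$, valid only for $\alpha^{k,i}\leq 1$, i.e.\ for $k\geq 2$ when $\cmax\leq 2$; the first block iteration with $c^{1,i}>1$ can genuinely overshoot and is not covered by either argument as written.
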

\begin{proof}
Note that the component-wise minimizer $\hat \x_i^{k+1}$ given in \eqref{eq:componentwise_minimizer} satisfies 
\begin{align*}
& \|\hat \x_{i}^{k+1}\|^2 + \sum_{j=1}^{i-1}\|\x_{j}^{k+1}\|^2 + \sum_{j=i+1}^p \|\x_{j}^k\|^2  \\
\leq & \frac{2}{\lambda} \tilde f(\x_1^{k+1},\hdots,\x_{i-1}^{k+1},\hat \x_{i}^{k+1},\x_{i+1}^k,\hdots,\x_{p}^k;\bX^k) & \text{(by \eqref{eq:bound_normx_by_f})}\\
\leq & \frac{2}{\lambda} \tilde f(\x_1^{k+1},\hdots,\x_{i-1}^{k+1},{\bm 0},\x_{i+1}^k,\hdots,\x_{p}^k;\bX^k) & \text{(by optimality)}\\
= & \frac{1}{\lambda}\left(\frac{1}{m_k}\sum_{l=1}^{m_k}\|\X^{k,l}\|^2\right) + \sum_{j=1}^{i-1}\|\x_{j}^{k+1}\|^2 + \sum_{j=i+1}^p \|\x_{j}^k\|^2,
\end{align*}
so that
\begin{equation}\label{eq:bnd_minimizer}
\|\hat \x_{i}^{k+1}\| \leq \sqrt{\frac{1}{\lambda} \left(\frac{1}{m_k}\sum_{l=1}^{m_k}\|\X^{k,l}\|^2\right)} =: R(\bX^k).
\end{equation}
To bound the iterates $\x_i^{k+1}$  note that Equation \eqref{eq:x_update_sals} can be rewritten as the convex combination
\[
\x_i^{k+1} = \alpha^{k,i}\x_i^{k} + (1-\alpha^{k,i})\hat \x_i^{k+1},
\]
which, by virtue of the stepsize bounds $0\leq \alpha^{k,i}\leq 2$, implies
\begin{align*}
\|\x_i^{k+1}\| &\leq |\alpha^{k,i}| \|\x^{k,i}\| + |1-a^{k,i}| \|\hat \x^{k,i}\| \leq \max\{ \|\x_i^k\|, R(\bX^k)\} (|\alpha^{k,i}|+|1-\alpha^{k,i}|)\\
&\leq \max\{ \|\x_i^k\|, R(\bX^k)\} \leq \max\left\{\|\x_i^{k-1}\|,R(\bX^k),R(\bX^{k-1})\right\} \leq \hdots \\
&\leq \max \left\{\|\x_i^1\|,R(\bX^k),R(\bX^{k-1}),\hdots,R(\bX^1)\right\}.
\end{align*}
\end{proof}

Since the regularity estimates derived above all involve powers of $\|\x\|$ and of $\|\bX^k\|$, \cref{lem:iterates_bnded} suggests that a bound on the data $\X$ is sufficient to guarantee the regularity of the cost functional, gradient, and Newton steps that are necessary to show convergence. In the following we assume such a bound and pursue its consequences.

\begin{assumption}[Bounded data]\label{ass:tensor_bounded}
There is a constant $0 < M < \infty$ so that 
\begin{equation}
\|\X\| \leq M, \ \ a.s. \text{ on }  \Omega.
\end{equation}
\end{assumption}

\begin{remark}
This assumption might conceivably be weakened to one pertaining to the statistical distribution of the maxima $R(\bX^1),R(\bX^2),\hdots, R(\bX^k)$. Specifically, letting $r_k=\max_{l=1,\hdots,k} R(\bX^l)$, it can be shown under appropriate conditions on the density of $R(\X^k)$, that $r_k$ converges in distribution to a random variable with known extreme value density. The analysis below will hold if it can be guaranteed that the limiting distribution has bounded moments of sufficiently high order. This possibility will be pursued in future work. 
\end{remark}

An immediate consequence of \cref{ass:tensor_bounded} is the existence of a uniform bound on the radius $R(\bX)$ and hence on the iterates $\x^{k,i}$.
\begin{corollary}\label{cor:x_bounds} Given \cref{ass:tensor_bounded}, there exist finite, non-negative constants $M_R, M_{x_i}$, and $M_x$ independent of $\x^k$ and of $\X^k$, so that for all $k=1,2,\hdots$,
\begin{align}
R(\bX^k) &\leq M_{R{\phantom{ \x_i}}} \qquad \text{a.s.\! on } \Omega \label{eq:radius_bnd}\\
\|\x_i^k\| &\leq M_{\x_i\phantom{R}} \qquad \text{a.s.\! on } \Omega \label{eq:xi_bnd}\\ 
\|\x^k\| &\leq M_{\x_{\phantom i}\phantom{R}} \qquad \text{a.s.\! on } \Omega \label{eq:x_bnd}
\end{align}
\end{corollary}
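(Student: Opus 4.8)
The plan is to propagate the data bound of \cref{ass:tensor_bounded} through the estimates already established, constructing the three constants in sequence: $M_R$ directly from the assumption, then $M_{\x_i}$ via \cref{lem:iterates_bnded}, and finally $M_\x$ by aggregating over the $p$ blocks.

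First I would bound the radius $R(\bX^k)$. Each sample $\X^{k,l}$ is a realization of $\X$, so \cref{ass:tensor_bounded} gives $\|\X^{k,l}\|\leq M$ a.s. Substituting into the defining expression \eqref{eq:bnd_minimizer},
\[
R(\bX^k) = \sqrt{\frac{1}{\lambda}\left(\frac{1}{m_k}\sum_{l=1}^{m_k}\|\X^{k,l}\|^2\right)} \leq \sqrt{\frac{M^2}{\lambda}} = \frac{M}{\sqrt{\lambda}} =: M_R,
\]
a bound that holds a.s.\! and is independent of both $k$ and of the sample batch. This yields \eqref{eq:radius_bnd}.

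Next I would bound the block components. The proof of \cref{lem:iterates_bnded} establishes not merely the stated expectation bound but the pointwise (a.s.) estimate $\|\x_i^{k+1}\| \leq \max\{\|\x_i^1\|,R(\bX^k),\hdots,R(\bX^1)\}$. Combining this with the uniform radius bound $R(\bX^l)\leq M_R$ from the previous step gives $\|\x_i^k\|\leq \max\{\|\x_i^1\|,M_R\}=:M_{\x_i}$ a.s.\! for every $k$, which is \eqref{eq:xi_bnd}; note this constant is legitimate since the initial guess $\x_i^1$ is deterministic. Finally, the block decomposition $\x^k=(\x_1^k,\hdots,\x_p^k)$ gives $\|\x^k\|^2 = \sum_{i=1}^p\|\x_i^k\|^2 \leq \sum_{i=1}^p M_{\x_i}^2$, so \eqref{eq:x_bnd} holds with $M_\x := \bigl(\sum_{i=1}^p M_{\x_i}^2\bigr)^{1/2}$.

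This argument is a routine chaining of the preceding inequalities, so there is no substantive obstacle. The only point demanding care is that \cref{lem:iterates_bnded} is phrased in expectation, whereas the corollary requires almost-sure bounds; the remedy is simply to invoke the pointwise estimate that the lemma's proof in fact derives en route, rather than its expectation-valued conclusion.
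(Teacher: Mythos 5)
Your proof is correct and follows essentially the same chain as the paper's: bound $R(\bX^k)$ by $M/\sqrt{\lambda}$ using \cref{ass:tensor_bounded}, invoke \cref{lem:iterates_bnded} to get $\|\x_i^k\|\leq \max\{\|\x_i^1\|,M_R\}$, and aggregate over the $p$ blocks (the paper uses $\sqrt{p}\,M_{\x_i}$ where you use $\bigl(\sum_i M_{\x_i}^2\bigr)^{1/2}$, an immaterial difference). Your observation that the almost-sure conclusion requires the pointwise estimate derived inside the proof of \cref{lem:iterates_bnded}, rather than its expectation-valued statement, is a subtlety the paper passes over silently, and your handling of it is correct.
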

\begin{proof}
By \eqref{eq:bnd_minimizer} and \cref{ass:tensor_bounded},
\begin{align*}
R(\bX^k) =\sqrt{\frac{1}{\lambda} \left(\frac{1}{m_k}\sum_{l=1}^{m_k}\|\X^{k,l}\|^2\right)} \leq \frac{M}{\sqrt{\lambda}} =: M_R.
\end{align*}
\Cref{lem:iterates_bnded} then implies
\[
\|\x_i^k\| \leq \max \{\|\x_i^1\|,M_R\} =: M_{\x_i}
\]
and hence
\[
\|\x^k\| = \sqrt{\sum_{i=1}^p \|\x_i^k\|^2} \leq \sqrt{p}M_{\x_i} =: M_{\x}.
 \]
\end{proof}

\subsection{Convergence}\label{subsection:convergence_proof}
We now consider the difference $\bm \delta^k = \tilde{\bm g}^{k}-\bm g^k$ between the sampled and expected search directions. For the standard stochastic gradient descent algorithm, this stochastic quantity vanishes in expectation, given past information $\mathcal{F}^{k-1}$, i.e. $\E\left[ {\bm \delta^{k}}\vert \mathcal F^{k-1}\right]=0$, since $\tilde \g^k$ is an unbiased estimator of $\g^k$. For the stochastic alternating least squares method, this is no longer the case. \Cref{lem:bound_step_conditional} however uses the regularity of the gradient and the Hessian to establish an upper bound that decreases on the order $O(\frac{1}{k})$ as $k\rightarrow \infty$. 

\begin{lemma}\label{lem:bound_step_conditional}

There is a constant $M_N\geq 0$ such that for every $k=1,2,...$ and $i=1,...,p$,
\begin{equation}\label{eq:bound_step_conditional}
\left\|\E\left[ (H^{k,i})^{-1}{\bm \delta^{k,i}}\vert \mathcal F^{k-1}\right]\right\| \leq \frac{M_N}{k},
\end{equation}
where $M_N = 2\cmax P_N(M_{\x},0,M_{R})P_N(M_{\x},M_{\x_{i^*}},M_{R}) M_{\x}$.
\end{lemma}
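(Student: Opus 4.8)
The plan is to trace the conditional bias in $(H^{k,i})^{-1}{\bm\delta}^{k,i}$ back to the fact that the point $\x^{k,i-1}$ at which the fresh batch $\bX^k$ is evaluated is itself $\bX^k$-dependent, through the blocks $\x_1^{k+1},\hdots,\x_{i-1}^{k+1}$ already updated earlier in the same sweep. First I would rewrite everything in terms of the Newton-step map $N(\x;\X):=(\nabla_{\x_i}^2 f(\x))^{-1}\nabla_{\x_i}f(\x;\X)$ of \cref{lem:newton_step_lipschitz}. Since $H^{k,i}=\nabla_{\x_i}^2 f(\x^{k,i-1})$ does not depend on $\X$, we have $(H^{k,i})^{-1}\tilde{\bm g}^{k,i}=\frac{1}{m_k}\sum_l N(\x^{k,i-1};\X^{k,l})$ and $(H^{k,i})^{-1}{\bm g}^{k,i}=N_F(\x^{k,i-1})$, where $N_F(\bm y):=\E_\X[N(\bm y;\X)]$ denotes the expected Newton step. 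The second identity rests on the fact that for a \emph{fixed, deterministic} argument the Newton step is unbiased: the deterministic inverse Hessian factors out of the expectation, so $\E_\X[N(\bm y;\X)]=(\nabla_{\x_i}^2 f(\bm y))^{-1}\nabla_{\x_i}F(\bm y)$.

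The crucial device is to introduce the surrogate $\bm y:=\x^{k,0}=\x^k$, which is $\mathcal{F}^{k-1}$-measurable because it was produced before $\bX^k$ was drawn. Adding and subtracting $N(\bm y;\X^{k,l})$ and $N_F(\bm y)$ splits $(H^{k,i})^{-1}{\bm\delta}^{k,i}$ into three parts: a ``drift'' term $\frac{1}{m_k}\sum_l\big(N(\x^{k,i-1};\X^{k,l})-N(\bm y;\X^{k,l})\big)$, a centered term $\frac{1}{m_k}\sum_l N(\bm y;\X^{k,l})-N_F(\bm y)$, and the expected drift $N_F(\bm y)-N_F(\x^{k,i-1})$. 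Taking $\E[\,\cdot\mid\mathcal{F}^{k-1}]$, the centered term vanishes exactly, because $\bm y$ is $\mathcal{F}^{k-1}$-measurable while the $\X^{k,l}$ are drawn independently of $\mathcal{F}^{k-1}$, so the conditional unbiasedness of the Newton step applies. This is precisely the property that replaces the zero-bias identity $\E[{\bm\delta}^k\mid\mathcal{F}^{k-1}]=0$ enjoyed by ordinary SGD.

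It then remains to bound the two drift pieces, both of which are controlled by how far $\x^{k,i-1}$ has moved from $\x^k$. For each I would invoke the Lipschitz estimate of \cref{lem:newton_step_lipschitz}, evaluating $P_N$ at the uniform bounds $M_{\x},M_{\x_{i^*}},M_R$ furnished by \cref{cor:x_bounds} under \cref{ass:tensor_bounded}; for the expected-drift term I pass the conditional expectation through the norm and apply the same Lipschitz bound to $N_F=\E_\X[N(\cdot;\X)]$ (the inner expectation being over an independent copy of $\X$). This dominates the sum of the two drifts by $2\,P_N(M_{\x},M_{\x_{i^*}},M_R)\,\E\big[\|\x^{k,i-1}-\x^k\|\mid\mathcal{F}^{k-1}\big]$. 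The $1/k$ decay is then supplied by the drift estimate itself: the two iterates agree except on blocks $1,\hdots,i-1$, and each such block moved by a single update $\x_j^{k+1}-\x_j^k=-\alpha^{k,j}(H^{k,j})^{-1}\tilde{\bm g}^{k,j}$ with $\alpha^{k,j}=c^{k,j}/k\le\cmax/k$. Applying \cref{lem:newton_step_lipschitz} with $\tilde\x=\bm 0$, and noting $N(\bm 0;\X)=\bm 0$ since $\Theta_i$ vanishes at the origin, bounds each sampled Newton step by $P_N(M_{\x},0,M_R)M_{\x}$, whence each block displacement is at most $\tfrac{\cmax}{k}P_N(M_{\x},0,M_R)M_{\x}$. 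Collecting these factors delivers the claimed bound $M_N/k$.

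I expect the main obstacle to be the probabilistic bookkeeping around the surrogate rather than the estimates themselves: one must carefully track which quantities are $\mathcal{F}^{k-1}$-measurable versus $\bX^k$-dependent, justify that the centered term vanishes in conditional expectation, and handle $N_F$ (an expectation over an independent copy of $\X$) inside a further conditioning without conflating the two sources of randomness. Once the surrogate decomposition is set up correctly, the regularity bounds follow routinely by plugging the uniform iterate and data bounds into $P_N$.
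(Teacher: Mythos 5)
Your proposal is correct, and it reaches the paper's bound by a genuinely different probabilistic device. The paper handles the bias by symmetrization over an independent ghost batch: it writes $\E\left[ (H^{k,i})^{-1}{\bm \delta}^{k,i}\vert \F^{k-1}\right]$ as a double integral over the realized batch $\bX^k$ and an iid copy $\bX$, exchanges the labels of the two copies in one of the integrals (valid since they are identically distributed), and thereby reduces the conditional bias to an expectation of $N(\x^{k,i-1}(\bX^k);\bX^k)-N(\x^{k,i-1}(\bX);\bX^k)$, i.e.\ Newton steps at two iterates driven by different batches but evaluated at the same sample. You instead anchor at the $\F^{k-1}$-measurable iterate $\x^k$: adding and subtracting $N(\x^k;\X^{k,l})$ and $N_F(\x^k)$ produces a centered term that vanishes under $\E[\,\cdot\,\vert\F^{k-1}]$ by conditional unbiasedness at a frozen point, plus two drift terms controlled by $\|\x^{k,i-1}-\x^k\|$. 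From that point on the two arguments coincide: both invoke the Lipschitz continuity of the Newton-step map (\cref{lem:newton_step_lipschitz}) with the uniform bounds of \cref{cor:x_bounds}, and both get the $1/k$ decay from the step size together with the observation that the Newton step vanishes at the origin (applying \eqref{eq:newton_step_lipschitz} with $\tilde\x=\bm 0$), so the resulting constants are identical in form. Your route is arguably more transparent — it makes explicit that the bias is caused entirely by the $O(1/k)$ drift of $\x^{k,i-1}$ away from the last $\F^{k-1}$-measurable point, and it avoids the double-integral label swap — at the cost of introducing $N_F$ and a Jensen step for the expected-drift piece, which the paper's exchange trick sidesteps. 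One shared blemish: summing the block displacements over $j=1,\hdots,i-1$ strictly yields an extra factor of at most $p$ in front of the stated $M_N$; the paper's own proof silently drops the same factor, so this does not distinguish the two arguments and is harmless for the $O(1/k)$ conclusion.
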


\begin{proof} 

Recall that the current iterate $\x^{k,i}$ is statistically dependent on sampled tensors $\bX^1,\hdots,\bX^{k-1}$, while its first $i$ components also depend on $\bX^k$. For the purpose of computing $\E\left[ (H^{k,i})^{-1}{\bm \delta^{k,i}}\vert \mathcal F^{k-1}\right]$, we suppose that $\bX^1,\hdots, \bX^{k-1}$ are known and write $\x^{k,i-1} = \x^{k,i-1}(\bX^k)=(\x_1^{k+1}(\bX^k),\hdots,\x_{i-1}^{k+1}(\bX^k),\x_i^k,\hdots,\x_p^k)$ to emphasize its dependence on $\bX^k$. Thus
\begin{align*}
& \E\left[ (H^{k,i})^{-1}{\bm \delta^{k,i}}\vert \mathcal F^{k-1}\right] = \\
&\int_{\Omega^{m_k}} \left(\nabla_{\x_i}^2 f(\x^{k,i-1}(\bX^k))\right)^{-1} \left(\nabla_{\x_i}\tilde f(\x^{k,i-1}(\bX^k);\bX^k)-\nabla_{\x_i}F(\x^{k,i-1}(\bX^k))\right) d\mu_{\bX^k}.
\end{align*}
By definition, and since $\nabla_{\x_i} \tilde f$ is an unbiased estimator of $\nabla_{\x_i} F$, we have
\begin{align*}
\nabla_{\x_i} F(\x^{k,i-1}(\bX^k)) &= \int_{\Omega} \nabla_{x_i}f(\x^{k,i-1}(\bX^k);\X)d\mu_\X \\
&= \int_{\Omega^{m_k}} \nabla_{x_i}\tilde f(\x^{k,i-1}(\bX^k);\bX)d\mu_{\bX}.
\end{align*}
Moreover, since $\bX$ and $\bX^k$ are identically distributed, 
\begin{align*}
& \int_{\Omega^{m_k}} \left(\nabla_{\x_i}^2 f(\x^{k,i-1}(\bX^k))\right)^{-1} \nabla_{\x_i}F(\x^{k,i-1}(\bX^k)) d\mu_{\bX^k} \\
= &\int_{\Omega^{m_k}}\int_{\Omega^{m_k}} \left(\nabla_{\x_i}^2 f(\x^{k,i-1}(\bX^k))\right)^{-1} \nabla_{\x_i}\tilde f(\x^{k,i-1}(\bX^k);\bX) d\mu_{\bX} d\mu_{\bX^k}\\
= &\int_{\Omega^{m_k}}\int_{\Omega^{m_k}} \left(\nabla_{\x_i}^2 f(\x^{k,i-1}(\bX))\right)^{-1} \nabla_{\x_i}\tilde f(\x^{k,i-1}(\bX);\bX^k) d\mu_{\bX} d\mu_{\bX^k}.
\end{align*}
Therefore
\begin{multline}\label{eq:cond_expect_step}
\E\left[(H^{k,i})^{-1}{\bm \delta^{k,i}}\vert \mathcal F^{k-1}\right] \\
= \int_{\Omega^{m_k}}\int_{\Omega^{m_k}} \left(\nabla_{\x_i}^2 f(\x^{k,i-1}(\bX^k))\right)^{-1} \nabla_{\x_i}\tilde f(\x^{k,i-1}(\bX^k);\bX^k) d\mu_{\bX}d\mu_{\bX^k}\\
-\int_{\Omega^{m_k}}\int_{\Omega^{m_k}} \left(\nabla_{\x_i}^2 f(\x^{k,i-1}(\bX))\right)^{-1} \nabla_{\x_i}\tilde f(\x^{k,i-1}(\bX);\bX^k) d\mu_{\bX}d\mu_{\bX^k}.
\end{multline}
In the special case  $i=1$, the iterate $\x^{k,0}=\x^{k-1}$, and hence $H^{k,1}$, does not depend on $\bX^k$. Since $\nabla_{\x_1}\tilde f(\x^{k-1};\bX^k)$ is an unbiased estimator of $\nabla_{\x_1}F(\x^{k-1})$, we  have
\[
\E\left[(H^{k,1})^{-1}{\bm \delta^{k,1}}\vert \mathcal F^{k-1}\right] = 0.
\] 
We now consider the case $i=2,...,p$. Using the Lipschitz continuity of the mapping $\x \mapsto\left(\nabla_{\x_i}^2 f(\x)\right)^{-1} \nabla_{\x_i} f(\x;\bX^k)$ (\cref{lem:newton_step_lipschitz}), the bounds in \cref{cor:x_bounds}, and Jensen's inequality, we obtain
\begin{multline*}
\left\|\E\left[(H^{k,i})^{-1}{\bm \delta^{k,i}}\vert \mathcal F^{k-1}\right]\right\| \\
\leq P_N(M_\x,M_{\x_{i^*}},M) \int_{\Omega^{m_k}}\int_{\Omega^{m_k}}  \|\x^{k,i-1}(\bX^k)-\x^{k,i-1}(\bX)\| d\mu_{\bX}d\mu_{\bX^k},
\end{multline*}
the integrand of which can be bounded by
\begin{multline*}
\|\x^{k,i-1}(\bX^k)-\x^{k,i-1}(\bX)\| \leq \sum_{j=1}^{i-1} \|\x_j^{k+1}(\bX^k)-\x_j^{k+1}(\bX)\|\\
 =\sum_{j=1}^{i-1} \alpha^{k,j} \|( H^{k,j}(\bX^k))^{-1}\tilde \g^{k,j}(\bX^k)-( H^{k,j}(\bX))^{-1}\tilde \g^{k,j}(\bX)\|\\
 \leq \frac{\cmax}{k} \left(\|( H^{k,j}(\bX^k))^{-1}\tilde \g^{k,j}(\bX^k)\|+ \|( H^{k,j}(\bX))^{-1}\tilde \g^{k,j}(\bX)\|\right).
\end{multline*}
The result now follows from taking expectations and using \eqref{eq:newton_step_lipschitz} with $\tilde \x=\bm 0$, in conjunction with \eqref{eq:radius_bnd}, and \eqref{eq:x_bnd}.
\end{proof}

\begin{lemma}\label{lem:bound_measurable_times_newton}
If ${\bm h}$ is $\mathcal F^{k-1}$-measurable and $\E[\|\bm{h}\|]<\infty$ then 
\begin{equation}
\E\left[\left\langle {\bm h}, ( H^{k,i})^{-1} {\bm \delta}^{k,i} \right \rangle \right] \leq \frac{M_N}{k} \E\left[\|\bm{h}\|\right].
\end{equation}

\end{lemma}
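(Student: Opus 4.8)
The plan is to reduce this statement to \cref{lem:bound_step_conditional} by conditioning on the past. Since $\bm h$ is $\mathcal F^{k-1}$-measurable while the randomness that makes $(H^{k,i})^{-1}\bm\delta^{k,i}$ biased enters only through $\bX^k$, the natural tool is the law of total expectation together with the ``taking out what is known'' property of conditional expectation.

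First I would write, using the tower property,
\[
\E\left[\left\langle \bm h, (H^{k,i})^{-1}\bm\delta^{k,i}\right\rangle\right]
= \E\left[\E\left[\left\langle \bm h, (H^{k,i})^{-1}\bm\delta^{k,i}\right\rangle \,\middle|\, \mathcal F^{k-1}\right]\right].
\]
Because $\bm h$ is $\mathcal F^{k-1}$-measurable, it may be pulled outside the inner conditional expectation, giving
\[
\E\left[\left\langle \bm h,\; \E\left[(H^{k,i})^{-1}\bm\delta^{k,i}\,\middle|\,\mathcal F^{k-1}\right]\right\rangle\right].
\]
Next I would apply the Cauchy--Schwarz inequality to the inner product pointwise on $\Omega$, bounding the integrand by $\|\bm h\|\,\bigl\|\E[(H^{k,i})^{-1}\bm\delta^{k,i}\mid\mathcal F^{k-1}]\bigr\|$. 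The conditional-expectation factor is exactly the quantity controlled by \cref{lem:bound_step_conditional}, so it is at most $M_N/k$. Since $M_N/k$ is deterministic, it factors out of the outer expectation, leaving $\frac{M_N}{k}\E[\|\bm h\|]$, which is the claimed bound.

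I do not expect a serious obstacle here; the result is essentially a one-line consequence of \cref{lem:bound_step_conditional}. The only points requiring care are the measurability bookkeeping and integrability: one must confirm that $\bm h$ being $\mathcal F^{k-1}$-measurable genuinely permits extracting it from the conditional expectation, and that $\E[\|\bm h\|]<\infty$ together with the a.s.\! bound on $\|(H^{k,i})^{-1}\bm\delta^{k,i}\|$ (which follows from the uniform bounds of \cref{cor:x_bounds} via \cref{lem:newton_step_lipschitz} with $\tilde\x=\bm 0$) justifies both the application of Fubini/the tower property and the finiteness of all the expectations involved. Once those routine facts are in place, the chain of inequalities closes immediately.
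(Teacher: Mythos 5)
Your proposal is correct and follows essentially the same route as the paper: law of total expectation, extraction of the $\mathcal F^{k-1}$-measurable factor $\bm h$ from the inner conditional expectation, Cauchy--Schwarz, and then \cref{lem:bound_step_conditional} to bound $\left\|\E\left[(H^{k,i})^{-1}\bm\delta^{k,i}\,\middle|\,\mathcal F^{k-1}\right]\right\|$ by $M_N/k$. If anything, your bookkeeping is slightly cleaner than the paper's, which writes $\E\left[\bm h \middle| \mathcal F^{k-1}\right]$ and even $\left\|\E\left[\bm h\right]\right\|$ where it simply means $\bm h$ and $\|\bm h\|$.
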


\begin{proof}
Using the law of total expectation, the $\mathcal F^{k-1}$-measurability of $\bm{h}$, and \cref{lem:bound_step_conditional}, we have
\begin{align*}
\E\left[\left\langle {\bm h}, ( H^{k,i})^{-1} {\bm \delta}^{k,i} \right \rangle\right] &= \E_{\bX^1,\hdots,\bX^{k-1}} \left[ \E\left[\left\langle {\bm h}, ( H^{k,i})^{-1} {\bm \delta}^{k,i} \right \rangle \middle| \mathcal F^{k-1}\right] \right]\\
&= \E_{\bX^1,\hdots,\bX^{k-1}} \left[ \left\langle \E\left[{\bm h}\middle| \mathcal F^{k-1}\right], \E\left[( H^{k,i})^{-1} {\bm \delta}^{k,i} \middle| \mathcal F^{k-1}\right] \right \rangle  \right]\\
& \leq  \E_{\bX^1,\hdots,\bX^{k-1}} \left[ \left\| \E\left[{\bm h}\right]\right\|\left\|\E\left[( H^{k,i})^{-1} {\bm \delta}^{k,i} \middle| \mathcal F^{k-1}\right] \right \|  \right]\\
&\leq  \E_{\bX^1,\hdots,\bX^{k-1}} \left[\frac{M_N}{k} \E_{\bX^{k}}[\|\bm{h}\|]\right] = \frac{M_N}{k}\E[\|\bm h\|].
 \end{align*}
\end{proof}

The main convergence theorem is based on the following lemma (for a proof, see e.g. Lemma A.5, \cite{Mairal2013})
\begin{lemma}\label{lem:pos_sequence}
Let $\{a_k\}_{k=1}^\infty$ and $\{b_k\}_{k=1}^\infty$ be any two nonnegative, real sequences so that (i) $ \sum_{k=1}^\infty a_k = \infty$, (ii) $\sum_{k=1}^\infty a_k b_k < \infty$, and (iii) there is a constant $K>0$ so that $|b_{k+1}-b_k|\leq Ka_k$ for $k\geq 1$. Then $\lim_{k\rightarrow \infty} b_k = 0$.
\end{lemma}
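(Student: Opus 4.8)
The plan is to prove $b_k \to 0$ in two stages: first establish that $\liminf_{k\to\infty} b_k = 0$ using only hypotheses (i) and (ii), and then invoke the slow-variation hypothesis (iii) to rule out oscillation and upgrade this to $\lim_{k\to\infty} b_k = 0$.

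For the first stage I would argue by contradiction. If $\liminf_{k\to\infty} b_k > 0$, there exist $\epsilon > 0$ and an index $N$ with $b_k \ge \epsilon$ for all $k \ge N$, whence $\sum_{k\ge N} a_k b_k \ge \epsilon \sum_{k \ge N} a_k = \infty$ by (i), contradicting the summability assumed in (ii). Hence $\liminf_{k\to\infty} b_k = 0$.

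For the second stage, suppose for contradiction that $\limsup_{k\to\infty} b_k = L > 0$ and fix the two levels $\mu_1 = L/4$ and $\mu_2 = L/2$. Since $b_k$ exceeds $\mu_2$ infinitely often while also dropping below $\mu_1$ infinitely often, I can extract infinitely many pairwise disjoint \emph{descent} intervals: for suitable indices $n_j$ with $b_{n_j} \ge \mu_2$, let $n_j'$ be the first index after $n_j$ with $b_{n_j'} \le \mu_1$, so that $b_k > \mu_1$ for every $k$ with $n_j \le k < n_j'$. Telescoping the drop and applying (iii) termwise gives
\[
\mu_2 - \mu_1 \le b_{n_j} - b_{n_j'} = \sum_{k=n_j}^{n_j'-1}(b_k - b_{k+1}) \le K\sum_{k=n_j}^{n_j'-1} a_k,
\]
so that $\sum_{k=n_j}^{n_j'-1} a_k \ge (\mu_2-\mu_1)/K = L/(4K)$. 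Since $b_k > \mu_1 = L/4$ on exactly this index range, each interval contributes $\sum_{k=n_j}^{n_j'-1} a_k b_k \ge \frac{L}{4}\cdot\frac{L}{4K} = \frac{L^2}{16K}$ to the series. Summing the contributions of the infinitely many disjoint intervals forces $\sum_k a_k b_k = \infty$, contradicting (ii); hence $\limsup_{k\to\infty} b_k = 0$, and nonnegativity yields $\lim_{k\to\infty} b_k = 0$.

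The routine first stage presents no difficulty, so the crux is the second stage, and the main obstacle there is to set up the crossing intervals so that (iii) can be exploited without boundary losses. The key observation is to use \emph{descent} intervals rather than ascent intervals: the telescoped drop $b_{n_j} - b_{n_j'}$ is controlled by $\sum_{k=n_j}^{n_j'-1}a_k$ over precisely the index range on which $b_k$ remains above $\mu_1$, so the lower bound on $\sum a_k$ forced by (iii) aligns exactly with the region where $b_k$ is large. (An ascent interval, by contrast, has its small-$b$ endpoint inside the telescoping range, which spoils the per-interval bound.) This alignment is what produces a uniform positive contribution per crossing and, together with the disjointness of the intervals, the contradiction with (ii).
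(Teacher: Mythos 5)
Your proof is correct and follows essentially the same route as the paper's: the paper does not prove this lemma itself but defers to Lemma A.5 of \cite{Mairal2013}, whose proof is exactly your two-stage argument (force $\liminf_{k\rightarrow\infty} b_k = 0$ from (i)--(ii), then rule out a positive $\limsup$ by lower-bounding $\sum_k a_k b_k$ over infinitely many disjoint descent intervals via (iii)). The one technical blemish is that your levels $\mu_1 = L/4$, $\mu_2 = L/2$ are undefined when $L = \limsup_{k\rightarrow\infty} b_k = \infty$, which the hypotheses do not exclude a priori; this is repaired by running the identical crossing argument with fixed finite levels, e.g.\ choosing any $\epsilon > 0$ such that $b_k > \epsilon$ infinitely often and crossing between $\epsilon/2$ and $\epsilon$.
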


\begin{theorem}\label{thm:convergence}
Let ${\bm x}^{k,i}$ be the sequence generated by the Stochastic Alternating Least Squares (SALS) method outlined in \cref{alg:stochastic_als}. Then 
\[
\lim_{k\rightarrow \infty} \E\left[\|\nabla_{\bm x_i}F({\bm x}^{k,i})\|^2\right] = 0, \qquad i=1,...,p.
\]
\end{theorem}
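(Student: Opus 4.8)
The plan is to invoke the elementary sequence result \cref{lem:pos_sequence} with $a_k=\alpha^{k,i}$ and $b_k=\E\left[\|\nabla_{\x_i}F(\x^{k,i})\|^2\right]$, and everything rests on an expected per-subiteration descent inequality for $F$. Since the move from $\x^{k,i-1}$ to $\x^{k,i}$ changes only block $i$, and $\nabla_{\x_i}F$ is Lipschitz in that block with the uniform constant $L_i:=P_{L_i}(M_{\x_{i^*}})$ (\cref{cor:compgrad_comp_lipschitz,cor:x_bounds}), I would start from the quadratic upper bound $F(\x^{k,i})\le F(\x^{k,i-1})+\langle\g^{k,i},\x_i^{k+1}-\x_i^k\rangle+\tfrac{L_i}{2}\|\x_i^{k+1}-\x_i^k\|^2$, substitute the update $\x_i^{k+1}-\x_i^k=-\alpha^{k,i}(H^{k,i})^{-1}\tilde{\g}^{k,i}$ with $\tilde{\g}^{k,i}=\g^{k,i}+\bm{\delta}^{k,i}$, use the curvature bound $\langle\g^{k,i},(H^{k,i})^{-1}\g^{k,i}\rangle\ge \tfrac{1}{L_i}\|\g^{k,i}\|^2$ from \cref{cor:hessian_posdef}, and bound $\|(H^{k,i})^{-1}\tilde{\g}^{k,i}\|=\|\x_i^k-\hat{\x}_i^{k+1}\|\le M_{\x_i}+M_R$ almost surely via \eqref{eq:bnd_minimizer} and \cref{cor:x_bounds}. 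This produces, for a $k$-independent constant $C$,
\[
F(\x^{k,i})\le F(\x^{k,i-1})-\tfrac{\alpha^{k,i}}{L_i}\|\g^{k,i}\|^2-\alpha^{k,i}\big\langle\g^{k,i},(H^{k,i})^{-1}\bm{\delta}^{k,i}\big\rangle+C(\alpha^{k,i})^2.
\]

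The main obstacle is the expectation of the cross term, because the bias $\bm{\delta}^{k,i}$ does not vanish conditionally (this is what distinguishes SALS from ordinary SGD) and, for $i\ge2$, the factor $\g^{k,i}=\nabla_{\x_i}F(\x^{k,i-1})$ is $\F^{k}$- but not $\F^{k-1}$-measurable, so \cref{lem:bound_measurable_times_newton} cannot be applied to it directly. I would circumvent this by splitting $\g^{k,i}=\bm{h}+(\g^{k,i}-\bm{h})$ about the $\F^{k-1}$-measurable proxy $\bm{h}:=\nabla_{\x_i}F(\x^{k-1})$ (recall $\x^{k-1}=\x^{k,0}$). The proxy piece is controlled by \cref{lem:bound_measurable_times_newton}, giving $\E[\langle\bm{h},(H^{k,i})^{-1}\bm{\delta}^{k,i}\rangle]\le\tfrac{M_N}{k}\E[\|\bm{h}\|]=O(1/k)$. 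For the correction, $\x^{k,i-1}$ and $\x^{k-1}$ differ only in their first $i-1$ blocks, each moved by a step of size $\alpha^{k,j}\|(H^{k,j})^{-1}\tilde{\g}^{k,j}\|=O(1/k)$, so $\|\g^{k,i}-\bm{h}\|=O(1/k)$ by the full Lipschitz estimate \cref{lem:compgrad_lipschitz} (uniformized through \cref{cor:x_bounds}); together with the almost-sure bound $\|(H^{k,i})^{-1}\bm{\delta}^{k,i}\|\le\tfrac{1}{\lambda}\|\bm{\delta}^{k,i}\|$ and Cauchy--Schwarz this makes the correction $O(1/k)$ as well. Hence $|\E\langle\g^{k,i},(H^{k,i})^{-1}\bm{\delta}^{k,i}\rangle|=O(1/k)$, and $\alpha^{k,i}$ times this is summable.

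Finally I would take expectations in the descent inequality, sum over $i=1,\dots,p$ and $k=1,\dots,N$, and telescope. The cross terms contribute $\sum_k\alpha^{k,i}O(1/k)=\sum_k O(1/k^2)<\infty$ and the stepsize-squared terms contribute $\sum_k C(\alpha^{k,i})^2<\infty$; since $F\ge0$ is bounded below, letting $N\to\infty$ and using nonnegativity of each descent term yields $\sum_{k}\alpha^{k,i}\E[\|\g^{k,i}\|^2]<\infty$ for every $i$. Because $\x^{k,i}$ and $\x^{k,i-1}$ differ by the block-$i$ step of size $O(1/k)$ and all component gradients are uniformly bounded (\cref{cor:bnd_gradient_norm,cor:x_bounds}), replacing $\g^{k,i}=\nabla_{\x_i}F(\x^{k,i-1})$ by $\nabla_{\x_i}F(\x^{k,i})$ costs only a summable $O(1/k^2)$ correction, which establishes hypothesis (ii) of \cref{lem:pos_sequence} for $a_k=\alpha^{k,i}$ and $b_k=\E[\|\nabla_{\x_i}F(\x^{k,i})\|^2]$. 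Hypothesis (i) holds since $\alpha^{k,i}\ge\cmin/k$ forces $\sum_k\alpha^{k,i}=\infty$, and hypothesis (iii) holds because a full block update gives $\|\x^{k+1,i}-\x^{k,i}\|=O(1/k)$, so the Lipschitz continuity of $\nabla_{\x_i}F$ (\cref{lem:compgrad_lipschitz}) and boundedness of the gradients bound $|b_{k+1}-b_k|$ by a multiple of $1/k\asymp\alpha^{k,i}$. \Cref{lem:pos_sequence} then delivers $\lim_{k\to\infty}\E[\|\nabla_{\x_i}F(\x^{k,i})\|^2]=0$ for each $i=1,\dots,p$.
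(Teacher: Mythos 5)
Your proposal is correct and takes essentially the same route as the paper's proof: the same sequence lemma (\cref{lem:pos_sequence}) with $a_k\asymp 1/k$ and $b_k$ the expected squared block gradient, a block-wise quadratic descent inequality, the same splitting of the bias cross-term about the $\F^{k-1}$-measurable proxy gradient so that \cref{lem:bound_measurable_times_newton} controls one piece while Lipschitz continuity and the $O(1/k)$ step sizes control the other, then telescoping for condition (ii) and the $O(1/k)$ block-update bound for condition (iii). Your deviations are cosmetic refinements rather than a different argument---the descent-lemma upper bound in place of the paper's exact block Taylor expansion, the a.s.\ bound $\|(H^{k,i})^{-1}\tilde\g^{k,i}\|=\|\x_i^k-\hat\x_i^{k+1}\|\le M_{\x_i}+M_R$, and your explicit summable correction for the shift between $\g^{k,i}=\nabla_{\x_i}F(\x^{k,i-1})$ and $\nabla_{\x_i}F(\x^{k,i})$, which the paper elides (note only the harmless indexing slip: under the paper's convention $\x^{k,0}=\x^{k}$, not $\x^{k-1}$).
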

\begin{proof}
We base the proof on \cref{lem:pos_sequence} with $a_k = \frac{1}{k}$ and $b_k = \|\bm g^{k,i}\|^2$. Clearly, Condition (i) in \cref{lem:pos_sequence} is satisfied.  To show that Condition (ii) holds, i.e. that $\sum_{k=1}^\infty \frac{1}{k}\E\left[\|\g^{k,i}\|^2\right]<\infty$ for $i=1,\hdots, p$, we use the component-wise Taylor expansion \eqref{eq:expected_cost_taylor} of the expected cost centered at the iterate $\x^{k,i-1}$ and the SALS update given in \eqref{eq:x_update_sals} to express the expected decrease as
\begin{align*}
& F({\bm x}^{k,i}) - F({\bm x}^{k,i-1}) \\
= & \nabla_{\bm x_i} F({\bm x}^{k,i})^T({\bm x}^{k,i+1} - {\bm x}^{k,i}) + \frac{1}{2}({\bm x}^{k,i+1} - {\bm x}^{k,i})^T \nabla_{{\bm x}_i}^2 F({\bm x}^{k,i})({\bm x}^{k,i+1} - {\bm x}^{k,i})\\
=& -\alpha^{k,i} ({\bm g}^{k,i})^T ( H^{k,i})^{-1}\tilde{\bm g}^{k,i} + \frac{1}{2}(\alpha^{k,i})^2 (( H^{k,i})^{-1}\tilde{\bm g}^{k,i})^T  H^{k,i}( H^{k,i})^{-1}\tilde {\bm g}^{k,i}\\
= & -\alpha^{k,i} {\bm g^{k,i}}^T ( H^{k,i})^{-1} \bm g^{k,i} - \alpha^{k,i} (\bm g^{k,i})^T ( H^{k,i})^{-1} \bm \delta^{k,i}+ \frac{1}{2}(\alpha^{k,i})^2 (\tilde{\bm{g}}^{k,i})^T (H^{k,i})^{-1} \tilde{\bm{g}}^{k,i}.
\end{align*}
Since, by \eqref{eq:xi_bnd} and Corollary \eqref{cor:hessian_posdef}, 
\[
\alpha^{k,i} (\bm g^{k,i})^T ( H^{k,i})^{-1} \bm g^{k,i} \geq \frac{\cmin}{k} \frac{1}{\lambda + M_{\nabla^2 f}} \|\bm g^{k,i}\|^2 ,
\]
where $M_{\nabla^2 f} = P_{L_i}(M_{\x_i^*})$, the above expression can be rearranged as
\begin{align}\label{eq:conv_bounds1to3}
\frac{1}{k}\|\g^{k,i}\|^2 &\leq \frac{\lambda + M_{\nabla^2 f}}{\cmin}\left(E_1^{k,i} + E_2^{k,i} + E_3^{k,i}\right), \quad\text{with} 
\\\nonumber
&E_1^{k,i} = F({\bm x}^{k,i-1}) - F({\bm x}^{k,i}), 
\\\nonumber
&E_2^{k,i} = -\alpha^{k,i} (\bm g^{k,i})^T (\tilde H^{k,i})^{-1} \bm \delta^{k,i}, \ \text{and}
\\\nonumber
&E_3^{k,i} = \frac{1}{k^2}\frac{\cmax}{2} (\tilde{\bm{g}}^{k,i})^T (H^{k,i})^{-1} \tilde{\bm{g}}^{k,i}.
\end{align}
Evidently, Condition (ii) of \cref{lem:pos_sequence} holds as long as $\sum_{k=1}^{\infty} \E\left[E_j^{k,i}\right] < \infty$ for $j=1,2,3$. For a fixed $K>0$, the first term $E_1^{k,i}$ generates a telescoping sum so that  
\begin{align}
&\sum_{k=1}^K \sum_{i=1}^p \E\left[E_1^{k,i}\right] = \sum_{k=1}^K \E\left[F({\bm x}^{k})\right] - \E\left[F({\bm x}^{k+1})\right] 
\nonumber \\
= \;&\E\left[F(\x^{1})\right] - \E\left[F(\x^{K+1})\right] \leq \E\left[F(\x^{1})\right] < \infty \qquad \forall K>0, \label{eq:conv_bound01}
\end{align}
since $F(\x^{K+1})\geq 0$. Consider 
\begin{align*}
E_2^{k,i} &= - \alpha^{k,i} (\bm g^{k,i})^T ( H^{k,i})^{-1} \bm \delta^{k,i} \\
&=  - \alpha^{k,i} ({\bm g}^{k,i}-\nabla_{{\bm x}_i}F({\bm x}^k))^T ( H^{k,i})^{-1}{\bm \delta}^{k,i} + \alpha^{k,i}\nabla_{{\bm x}_i}F({\bm x}^k)^T (H^{k,i})^{-1}{\bm \delta}^{k,i}.
\end{align*}
Since $\nabla_{{\bm x}_i}F({\bm x}^k)$ is $\mathcal F^{k-1}$-measurable, \cref{lem:bound_measurable_times_newton} implies that the expectation of the second term can be bounded as follows
\[
\E\left[\alpha^{k,i}\nabla_{{\bm x}_i}F({\bm x}^k)^T (H^{k,i})^{-1}{\bm \delta}^{k,i}\right] \leq \frac{M_{\nabla f} M_N}{k^2},
\]
where $M_{\nabla f} = P_{L}(M_{\x},0,M)M_{\x_i}$ by \cref{cor:bnd_gradient_norm,cor:x_bounds}. The first term satisfies
\begin{align*}
& - \alpha^{k,i} (\nabla_{{\bm x}_i}F({\bm x}^{k,i})-\nabla_{{\bm x}_i}F({\bm x}^k))^T ( H^{k,i})^{-1}{\bm \delta}^{k,i} \\
\leq \;& \alpha^{k,i} \|\nabla_{{\bm x}_i}F({\bm x}^{k,i})-\nabla_{{\bm x}_i}F({\bm x}^k)\| \|( H^{k,i})^{-1}{\bm \delta}^{k,i}\|\\
\leq \;&\alpha^{k,i} M_N M_L\|{\bm x}^{k,i}-{\bm x}^k\| \\
=\;& M_N M_L \alpha^{k,i} \sqrt{\sum_{j=1}^i (\alpha^{k,j})^2\|(H^{k,j})^{-1}\tilde {\bm g}^{k,j}\|^2} \\
\leq\;& p (M_N)^2 M_L \left(\frac{\cmax}{k}\right)^2,
\end{align*}
where $M_L = P_L(M_\x,M_{\x_{i^*}},M)$ by \cref{lem:compgrad_lipschitz,cor:x_bounds}.
Combining these two bounds yields
\begin{equation}\label{eq:conv_bound02}
 \sum_{k=1}^\infty \sum_{i=1}^p \E\left[E_2^{k,i}\right] \leq \left(p^2(\cmax M_N)^2 M_L  + p M_{\nabla f} M_N \right)\sum_{k=1}^\infty \frac{1}{k^2}<\infty.
\end{equation}
Finally, we use \cref{cor:bnd_gradient_norm,cor:hessian_posdef,cor:x_bounds} to bound
\begin{align*}
\E\left[(\tilde{\bm{g}}^{k,i})^T (H^{k,i})^{-1} \tilde{\bm{g}}^{k,i}\right] \leq & \frac{1}{\lambda}\E\left[\|\tilde \g^{k,i}\|^2 \right]\leq \frac{(M_{\nabla f})^2}{\lambda}, 
\end{align*}
so that
\begin{equation}\label{eq:conv_bound03}
\sum_{k=1}^\infty \sum_{i=1}^p\E\left[E_3^{k,i}\right] \leq \frac{p\cmax (M_{\nabla f})^2}{2\lambda} \sum_{k=1}^\infty \frac{1}{k^2} <\infty.  
\end{equation} 
By virtue of Inequality \eqref{eq:conv_bounds1to3}, the upper bounds \eqref{eq:conv_bound01}, \eqref{eq:conv_bound02}, and \eqref{eq:conv_bound03} now imply that Condition (ii) of \cref{lem:pos_sequence} holds.  It now remains to show that Condition (iii) of \cref{lem:pos_sequence} holds, i.e. that $\E\left[\|\g^{k+1,i}\|^2\right] - \E\left[\|\g^{k,i}\|^2\right] = O(1/k)$ as $k\rightarrow \infty$ for all $i=1,\hdots,p$. By the reverse triangle inequality and by the Lipshitz continuity and boundedness of the gradient, 
\begin{align*}
& \|\g^{k+1,i}\|^2 - \|\g^{k,i}\|^2 \leq (\|\g^{k+1,i}\| + \|\g^{k,i}\|)(\|\g^{k+1,i} - \g^{k,i}\|) \\
\leq\;& 2 M_{\nabla f} M_{L} \|\x^{k+1,i}-\x^{k,i}\| \\
=\;& 2 M_{\nabla f} M_{L} \sqrt{\sum_{j=1}^{i} \|\x_j^{k+2}-\x_j^{k+1}\|^2 + \sum_{j=i+1}^p \|\x_j^{k+1}-\x_j^{k}\|^2}\\
=\;& 2 M_{\nabla f} M_{L}\sqrt{\sum_{j=1}^{i} \left(\alpha^{k+1,j}\right)^2 \|(H^{k+1,j})^{-1}\tilde \g^{k+1,j}\|^2 + \sum_{j=i+1}^p \left(\alpha^{k,j}\right)^2\|(H^{k,j})^{-1}\tilde \g^{k,j}\|^2}\\
\leq\;& 2 M_{\nabla f} M_{L} \frac{\cmax}{k} \sqrt{\sum_{j=1}^{i} \|(H^{k+1,j})^{-1}\tilde \g^{k+1,j}\|^2 + \sum_{j=i+1}^p \|(H^{k,j})^{-1}\tilde \g^{k,j}\|^2}\\
\leq\;& (2p \cmax M_{\nabla f} M_{L} M_N) \frac{1}{k}.
 \end{align*}
\end{proof}

\section{Numerical Experiments}\label{section:numerical}

In this section we detail the implementation of the SALS algorithm, discuss its computational complexity and perform a variety of numerical experiments to explore its properties and validate our theoretical results. 

We invoke the matricized form \eqref{eq:sample_fonop} of the component-wise minimization problem for the sake of computational and storage efficiency. In particular, at the $k$-th block iterate, we compute the sample/batch tensor average $\tilde \X^k = \frac{1}{m_k}\sum_{l=1}^{m_k} \X^{k,l}$. Cycling through each component $i=1,\hdots,p$, we then compute the component-wise minimizer as the stationary matrix $\hat A_i^{k+1}$ satisfying the matrix equation
\begin{equation}\label{eq:batch_sample_fonop}
\tilde \X_{(i)}^k \Theta_i = \hat A_i^{k+1}(\Theta_i^T\Theta_i + \lambda I_r).
\end{equation}
Finally, we update the $i$-th component factor matrix $A_i^{k}$ using the relaxation step
\[
A_i^{k+1} = \alpha^{k,i} \hat A_i^{k+1} + (1-\alpha^{k,i}) A_i^{k}.
\]
In all our numerical experiments, we use step sizes of the form $\alpha^{k,i} = \frac{1}{k}$, i.e. $c^{k,i} = 1$ for $i=1,\hdots,p$ and  $k=1,2,\hdots$. 
\begin{example}[Convergence] 
In our first example, we apply the SALS algorithm to a 3-dimensional random tensor of size $(30,40,50)$ constructed from a deterministic tensor $\E[\X]$ with known decomposition. We generate random samples by perturbing each entry of $\E[\X]$ with a uniformly distributed noise $U\sim UNIF(-\delta,\delta)$. We measured the accuracy at each step via the residual $\sqrt{\E[\|\X-\db{\x}\|^2]}/\sqrt{\E[\|\X\|^2]}$ which can be computed exactly, since $E[\X]$ and $\E[\X^2]$ are known. \Cref{fig:example01} shows that when the noise parameter is zero and the stepsize rule is constant, we recover the quotient-linear performance of the regularized ALS method. In the presence of noise, however, the convergence rate decreases.

\begin{figure}[tbhp]
\centering
\subfloat[Semi-log plot of of error in decomposing $\E{[\X]}$ using ALS, noise parameter $\sigma=0$, $n_1=30, n_2=40, n_3=50$.]{\includegraphics[scale=0.65]{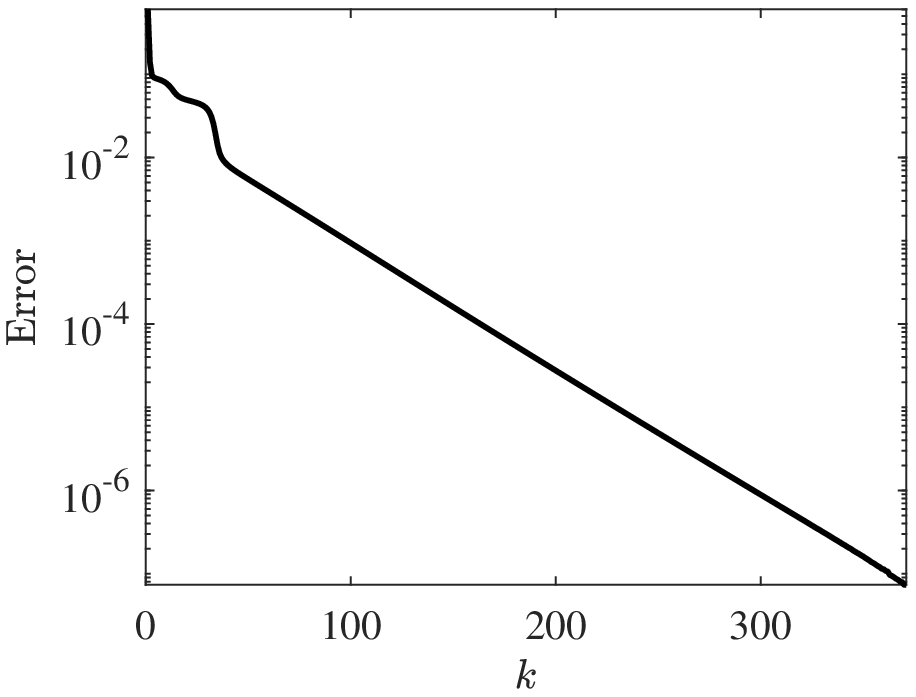}}\ \ \subfloat[Log-log plot of the error, using SALS with constant and variable stepsize with noise parameter $\delta=1$]{\includegraphics[scale=0.65]{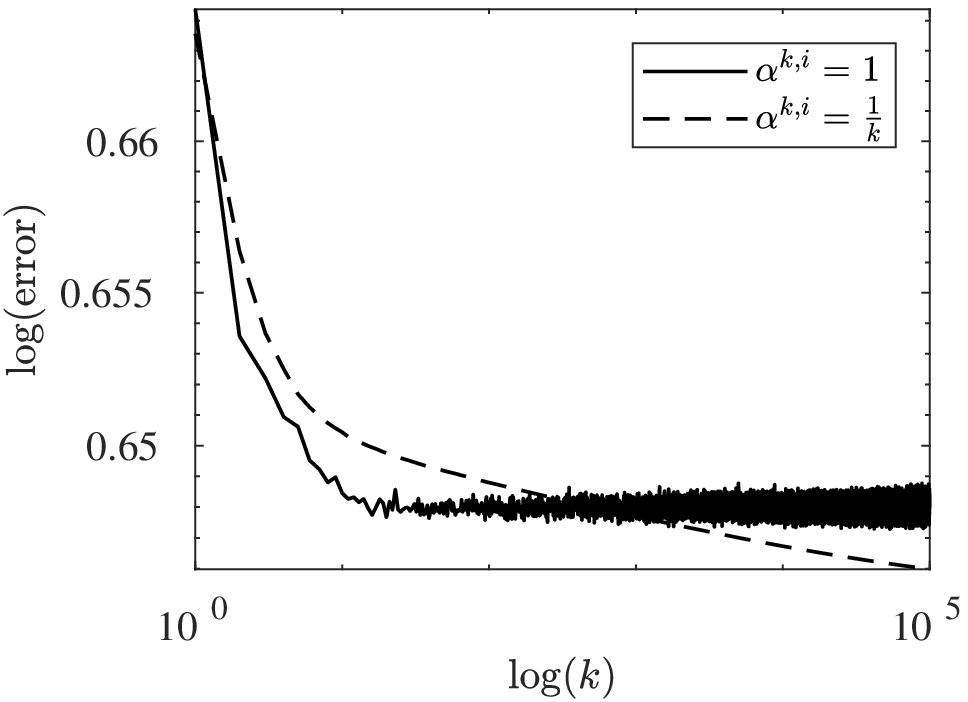}}
\caption{The influence of noise on the ALS iteration.}
\label{fig:example01}
\end{figure}
\end{example}

\subsubsection*{Computational Complexity} 

The computational effort of each block iteration of the SALS algorithm is determined by the cost of forming and solving Equation \eqref{eq:batch_sample_fonop} for each component. Recall that $r$ is the specified rank, $p$ the tensor's dimension, and $n=\sum_{i=1}^p n_i$ where $n_i$ is the size of the $i$-th vector component of each outer product in the expansion. Further, let $N=\prod_{i=1}^p n_i$ be the total number of entries in $\X$ and $\mathrm{nnz}(\X)$ be the number of non-zero entries in $\X$. It can then readily be seen that the cost of forming the coefficient matrix $\Theta_i^T\Theta_i + \lambda I_r$ is $O(pr^2 n)$, that of computing the matricized tensor times Khatri-Rao product (MTTKRP) $\tilde \X_{(i)}^k \Theta_i$ is $O(pr\cdot \mathrm{nnz}(\tilde \X_{(i)}^k))$, and that of solving the resulting dense symmetric system is $O(pr^3)$. If $r\ll N$, the computational cost of the MTTKRP dominates, especially as the density of $\tilde \X_{(i)}^k$ increases and hence $\mathrm{nnz}(\tilde \X_{(i)}^k)\rightarrow N$. 

It is often claimed that single-sample stochastic gradient methods, i.e. those using $m_k=1$ for $k=1,2,\hdots$, make more efficient use of samples than their batch counterparts, i.e. $m_k=m>1$ for $k=1,2,\hdots$. Below, we investigate this claim for the SALS method. For simplicity assume that each sample tensor $\X^l$ has a fixed fraction $\gamma \in [0,1]$ of zero entries, so that $\mathrm{nnz}(\X^l)=N(1-\gamma)$. Barring cancellation and assuming a uniform, independent distribution of nonzero entries, we estimate $\mathrm{nnz}(\tilde \X)=N(1-\gamma^m)$, where $m$ is the sample size. Adding the cost $O((m-1)N(1-\gamma)$ of forming the average $\tilde \X$ then gives the computational cost of each block iteration as
\begin{equation}\label{eq:per_iteration_cost}
C(m) = N(1-\gamma^m)+(m-1)N(1-\gamma).
\end{equation}
To compare the asymptotic efficiency of the single-sample SALS method with that of a batch method, we fix a computational budget $C_{\max}$ and investigate the estimated error achieved by each approach. Note that once $m$ is fixed, the number of iterations is determined by the budget as $k=C_{\max}/C(m)$. We first consider the case $m=1$ and let $\x_* = \argmin F(\x)$ be the overall minimizer. If we assume the convergence rate to be sublinear, i.e. the error satisfies
\[
\E\left[F(\x_k) - F(\x_*)\right] = O\left(\frac{1}{k^{\beta}}\right), \quad \text{ for some }\beta > 0,
\]
then the accuracy of the SALS method with a budget of $C_{\max}$ is on the order of
\begin{equation}\label{eq:efficiency_m_is_one}
E(1) = O\left(\left(\frac{C_{\max}}{C(1)}\right)^{-\beta}\right)=O\left(\left(\frac{C_{\max}}{\mathrm{nnz}(\tilde \X)}\right)^{-\beta}\right).
\end{equation}
Now suppose $m>1$. To obtain the best possible accuracy of the batch ALS methods (with budget $C_{\max}$) requires the optimal combination of batch size $m$ and number of iterations $k$. Let $\x_*^m = \argmin \tilde f_m(\x)$ be the batch minimizer, where $\tilde f_m(\x)=\frac{1}{m}\sum_{l=1}^m f(\x,\X^l)$, and let $\x_k$ be the terminal iterate. The overall error is given by, 
\begin{multline*}
\E\left[F(\x_k)-F(\x_*)\right] \leq \E\left[F(\x_k)-\tilde f_m(\x_k)\right] + \E\left[\tilde f_m(\x_k)-\tilde f_m(\x_*^m)\right] \\
+ \E\left[\tilde f_m(\x_*^m)-\tilde f_m(\x_*)\right] + \E\left[\tilde f_m(\x_*)-F(\x_*)\right],
\end{multline*}     
Since $\tilde f_m(\x_*^m)\leq \tilde f_m(\x^*)$, we can bound the error above by the sum of a sampling error $\E\left[F(\x_k)-\tilde f_m(\x_k)\right]+\E\left[\tilde f_m(\x_*)-F(\x_*)\right]$ and an optimization error $\E\left[\tilde f_m(\x_k)-\tilde f_m(\x_*^m)\right]$. Assuming the asymptotic behavior of the sampling error to be $O(1/\sqrt{m})$ and using the q-linear convergence rate $O(\rho^k)$ for some $\rho\in[0,1)$ of ALS (see \cite{Bezdek2003}) as an optimistic estimate, the total error takes the form $O(1/\sqrt{m}+\rho^k)$. Under the budget constraint, the error can thus be written as
\begin{equation}\label{eq:efficiency_m_more_than_one}
E(m) = O\left(\frac{1}{\sqrt{m}} + \rho^{\frac{C_{\max}}{C(m)}}\right).
\end{equation}
For large, high-dimensional tensors the number of iterations $C_{\max}/C(m)$ allowable under the budget will decrease rapidly as $m$ grows, so that the second term dominates the optimal error in Equation \eqref{eq:efficiency_m_more_than_one}, whereas smaller problems have an optimal error dominated by the Monte-Carlo sampling error $1/\sqrt{m}$. 

\begin{example}[Sparsity]
To compare the computational complexity of the single-sample SALS method, i.e. $m_k=1$, with that of the batch SALS method, i.e. $m_k=m$, we form a sequence of three-dimensional random sparse tensors with $n_1=30$, $n_2=40$, and $n_3=50$ and a fixed fraction $\gamma \in (0,1)$ of nonzero entries. For these tensors, the first and second moments are known and hence residuals can be computed accurately. In \cref{fig:example02a} we verify how an increase in the batch size $m$ deteriorates the sparsity of the tensor and lowers the sampling error at the predicted rate.

\begin{figure}[tbhp]
\centering
\subfloat[Deterioration of sparsity due to averaging.]{\includegraphics[scale=0.65]{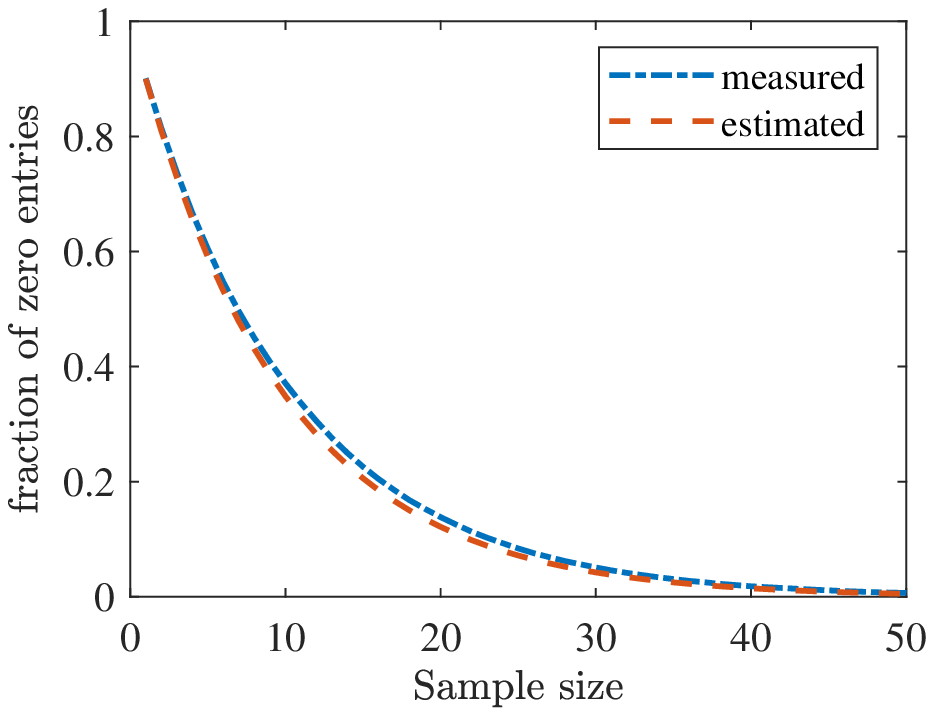}}\ \ \subfloat[Log-log plot of sampling error.]{\includegraphics[scale=0.65]{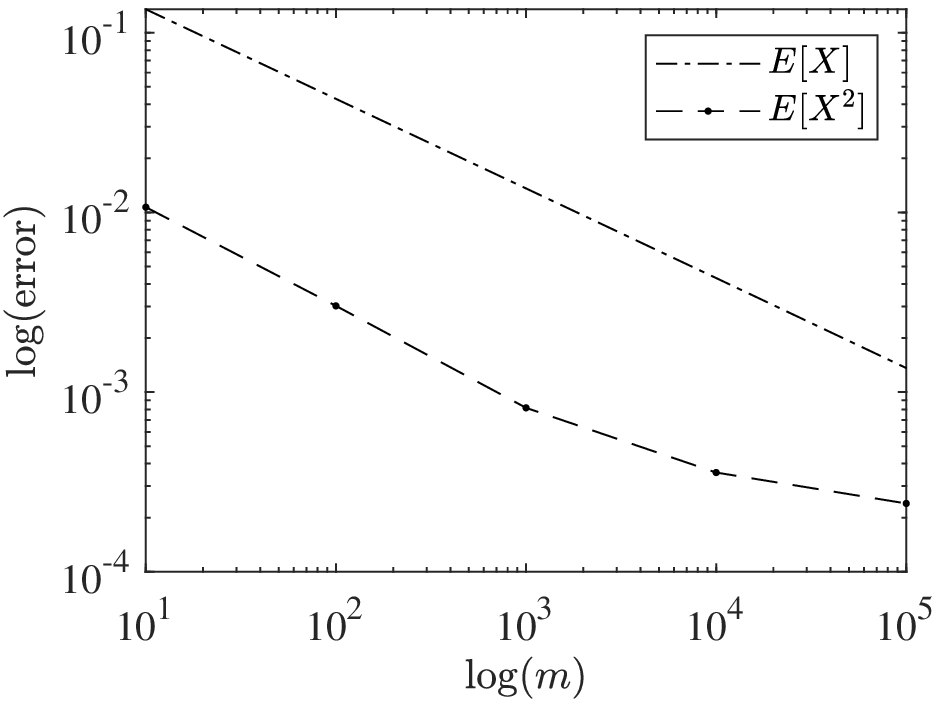}}
\caption{The effects of batch size $m$ on sparsity (left) and on the sampling error in the estimation of moments (right) for $(30,40,50)$ random tensor with sparsity parameter $\gamma=0.1$.}
\label{fig:example02a}
\end{figure}

\Cref{fig:example02b} shows the efficiency of the SALS algorithm with batch sizes $m=1,10,100$, and $1000$. To this end, we fixed a computational budget of 10k total samples and used this to determine the number of iteration steps based on the batch size $m$. Clearly the SALS method that uses a single sample for each block iteration makes much more efficient use of samples than variants with larger batch sizes.

\begin{figure}[tbhp]
\centering
\subfloat[Log-log plot of the residual error as a function of total number of samples.]{\includegraphics[scale=0.65]{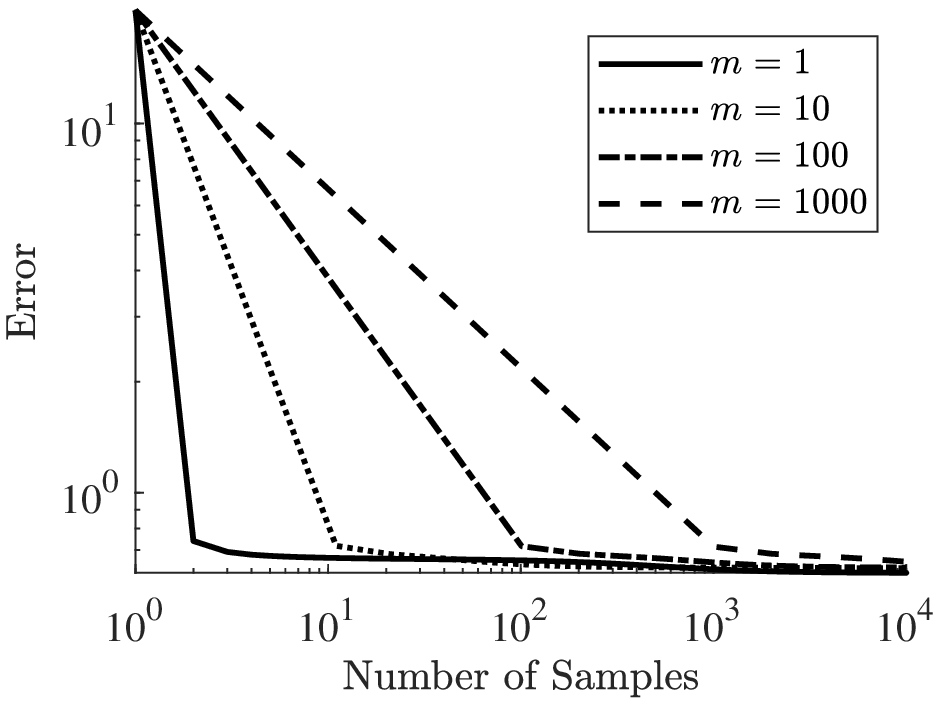}}\ \ \subfloat[Log-log plot of the residual error as a function of computational cost.]{\includegraphics[scale=0.65]{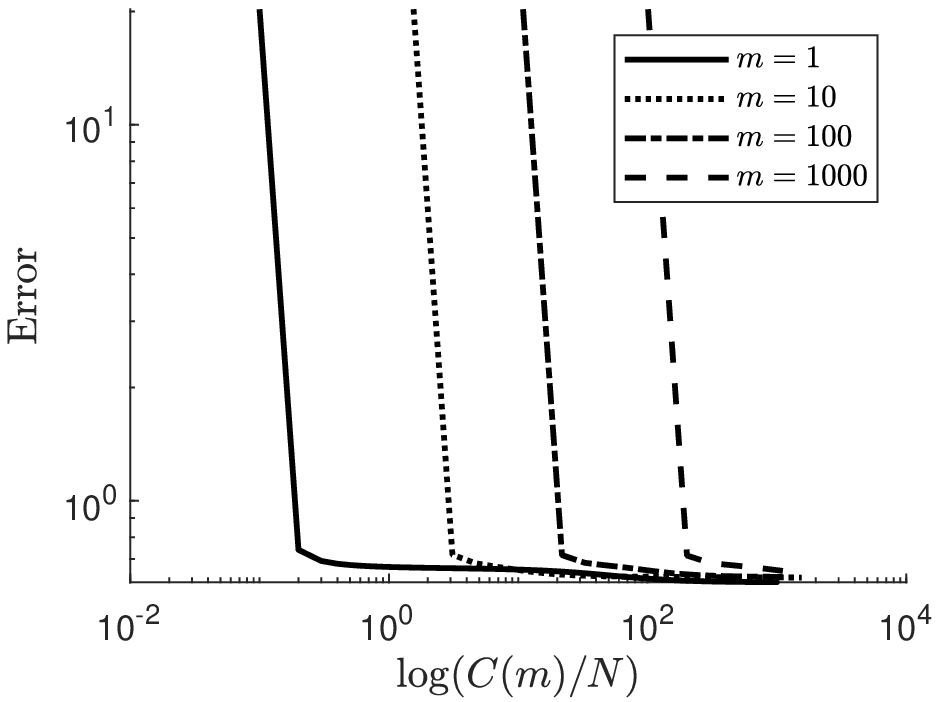}}
\caption{The efficiency of the SALS method for batch sizes $m=1,10,100,1000$.}
\label{fig:example02b}
\end{figure}

\end{example}

\section{Conclusion}\label{section:conclusion}

In this work, we showed that the SALS method is globally convergent and demonstrated the benefits of using stochastic-gradient-based approaches, especially in the decomposition of large, sparse tensors. In our analysis we focused on regularization in the Frobenius norm, and have not included a discussion on related proximal point algorithms or other regularization approaches (see e.g. \cite{Li2013b, Uschmajew2012, Xu2015}). Another interesting avenue of exploration relates to the choice of cost functional in tensor decomposition. Even though the quadratic structure afforded by the Frobenius norm is essential to the effectiveness of the ALS method, the standard statistical basis of comparison, namely expectation, could potentially be extended to other statistical metrics, such as those used in risk averse optimization methods. Finally, we foresee the application and extension of the SALS in the analysis and decomposition of real data sets.  

\newcommand{\arxiv}[1]{}

\bibliographystyle{siam}
\bibliography{sals_bib}

\end{document}